\documentclass[a4paper,11pt]{article}

\raggedbottom

\usepackage{graphics}
\usepackage{amsmath,amsthm}

\makeatletter
\thm@notefont{\fontseries\mddefault\upshape}
\thm@headfont{\scshape}
\makeatother

\theoremstyle{plain}
\newtheorem{theorem}{Theorem}
\newtheorem*{theorem*}{Theorem}
\newtheorem{proposition}[theorem]{Proposition}
\newtheorem*{proposition*}{Proposition}
\newtheorem{lemma}[theorem]{Lemma}
\newtheorem*{lemma*}{Lemma}
\newtheorem{corollary}[theorem]{Corollary}
\newtheorem*{corollary*}{Corollary}
\newtheorem*{remark*}{Remark}
\newtheorem*{acknowledgment}{Acknowledgment}

\def\SQ{\mathbf{Q}}

\def\SC{\mathbf{C}}
\def\SP{\mathbf{P}}

\def\supp{\mathop{\rm Supp}\nolimits}
\def\KB{\mathop{\bar{\kappa}}\nolimits}

\def\NV{\mathop r\nolimits}
\def\TA{\mathbin{\ast}}
\def\TP#1{{\vphantom{#1}}^{\mathit{t}}{#1}}
\def\NS{\mathop{o}\nolimits}
\def\TW#1{{t}_{#1}}
\def\PG{\overline{P}}

\title{On Orevkov's rational cuspidal plane curves}
\author{Keita Tono}
\date{}


\begin{document}
\maketitle
\def\thefootnote{}
\footnotetext{2000 Mathematics Subject Classification. 14H50}
\footnotetext{\textit{Key words and phrases.} rational plane curve, cusp, Orevkov}
\def\thefootnote{1}
\begin{abstract}
In this note,
we consider rational cuspidal plane curves having exactly one cusp
whose complements have logarithmic Kodaira dimension two.
We classify such curves with the property that
the strict transforms of them
via the minimal embedded resolution of the cusp have
the maximal self-intersection number.
We show that the curves given by the classification
coincide with those constructed by Orevkov.
\end{abstract}
\section{Introduction}
Let $C$ be a curve on $\SP^2=\SP^2(\SC)$.
A singular point of $C$ is said to be a \emph{cusp}
if it is a locally irreducible singular point.
We say that $C$ is \emph{cuspidal} (resp.~\emph{unicuspidal})
if $C$ has only cusps (resp.~one cusp) as its singular points.
We denote by $\KB=\KB(\SP^2\setminus C)$
the logarithmic Kodaira dimension
of the complement $\SP^2\setminus C$.
Let $C'$ denote the strict transform of a rational unicuspidal plane curve $C$
via the minimal embedded resolution of the cusp of $C$.
By \cite{y},
$\KB=-\infty$ if and only if $(C')^2>-2$.
By \cite[Proposition 2]{ts},
there exist no rational cuspidal plane curves with $\KB=0$.
See also \cite{ko1,or}.
Thus $\KB\ge 1$ if and only if $(C')^2\le -2$.
In \cite{to:uck1},
rational unicuspidal plane curves with $\KB=1$
have already been classified.
It was Orevkov \cite{or}
who constructed two sequences
$C_{4k}$, $C_{4k}^{\ast}$ ($k=1,2,\ldots$)
of rational unicuspidal plane curves with $\KB=2$.
See Section~\ref{sec:or} for details.
The purpose of this note is to classify
rational unicuspidal plane curves $C$ with $\KB=2$ and $(C')^2=-2$.
The main result of this note is the following:
\begin{theorem}\label{thm1}
Let $C$ be a rational unicuspidal plane curve with $\KB=2$.
Then
$C$ is projectively equivalent to one of the Orevkov's curves
if and only if $(C')^2=-2$.
\end{theorem}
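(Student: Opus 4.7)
The proof splits into two directions. For the ``only if'' direction, assuming $C$ is one of Orevkov's curves $C_{4k}$ or $C_{4k}^{\ast}$, I would read off the multiplicity sequence $(m_1,m_2,\ldots,m_n)$ of the cusp from the construction recalled in Section~\ref{sec:or} and verify directly that $(C')^2 = d^2 - \sum_i m_i^2$ equals $-2$, where $d = \deg C$ and the $m_i$ are the multiplicities of the successive proper transforms at the centres of the minimal embedded resolution. This is a bookkeeping computation once the construction is unwound.

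For the converse, suppose $C$ has degree $d$, $\KB = 2$ and $(C')^2 = -2$, and let $(m_1,\ldots,m_n)$ be the multiplicity sequence of the cusp in the minimal embedded resolution, with $m_1 \ge m_2 \ge \cdots$. Two identities hold: the genus formula $\sum m_i(m_i-1) = (d-1)(d-2)$ (since $C$ is rational with a unique singular point) together with $(C')^2 = d^2 - \sum m_i^2$. Combining them with $(C')^2 = -2$ gives $\sum m_i = 3d$ and $\sum m_i^2 = d^2 + 2$, which already force $m_1$ to be large relative to $d$ and restrict $d$ to a sparse set.

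Next I would analyse the weighted dual graph of $C' + E$, where $E = \sum E_j$ is the exceptional divisor of the minimal embedded resolution; its shape is encoded by the grouping of consecutive equal multiplicities in the Euclidean expansion of the cusp. Invoking $\KB = 2$, I would pass to the almost minimal model of the pair $(X,C'+E)$ (where $X$ denotes the resolution surface) by successively contracting boundary $(-1)$-curves meeting the rest of the boundary in at most two points, and exploit the nefness of the positive part of the Zariski decomposition of $K_X + C' + E$ to extract further linear relations between the ``block sizes'' of the multiplicity sequence.

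The endgame is an inductive descent. From the simplified dual graph I would extract a suitable $(-1)$-configuration whose contraction, followed by a Cremona transformation centred at the cusp, produces a rational unicuspidal plane curve $\widetilde C$ of strictly smaller degree with $\KB = 2$ and $(\widetilde C')^2 = -2$. By induction $\widetilde C$ is an Orevkov curve, and reversing the construction recovers $C$ as the next member of the same family, with the two families $\{C_{4k}\}$ and $\{C_{4k}^{\ast}\}$ corresponding to the two possible initial conditions of the recursion. The hardest step will be carrying out this descent cleanly: verifying that the numerical constraints together with $\KB = 2$ admit no sporadic solutions at any stage, and that the Cremona step preserves unicuspidality, rationality, and the parity that distinguishes the two Orevkov families.
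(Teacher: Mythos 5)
Your ``only if'' direction is workable in principle, but calling it a bookkeeping computation understates it: to read off the multiplicity sequence of $C_{4k}$ you must unwind the recursive construction $C_{4k}=f(C_{4k-4})$, and doing that rigorously is essentially the induction the paper carries out in Lemma~\ref{lem:oc}, which tracks how the linear chains $A_i,B_i$ of the resolution graph transform under $f$ and concludes $(C')^2=(C_{4k-4}')^2=-2$ directly. So there is no real divergence here, only more work than advertised (and you would still need Proposition~\ref{prop:ocpe} to reduce ``projectively equivalent to an Orevkov curve'' to ``constructed in the same way as one'').

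The genuine gap is in the converse. Your identities $\sum m_i=3d$ and $\sum m_i^2=d^2+2$ are correct consequences of the genus formula and $(C')^2=d^2-\sum m_i^2$, but they come nowhere near restricting $d$ to a sparse set or determining the cusp type; and for $\KB=2$ the nefness of the positive part of the Zariski decomposition of $K+D$ yields only inequalities of Bogomolov--Miyaoka type, not the linear relations between the blocks of the multiplicity sequence that your outline requires. The inductive descent is where the entire difficulty lives, and you give no mechanism for locating the $(-1)$-configuration, proving the Cremona step preserves the hypotheses, or excluding sporadic solutions at each stage. The paper supplies exactly this missing mechanism: since $(C')^2=-2$, precisely one of the two boundary components meeting $D_0$ is a $(-2)$-curve, which becomes a $0$-curve after contracting $D_0+C'$ and so induces a $\SP^1$-fibration whose restriction to the $\SQ$-homology plane $\SP^2\setminus C$ is a $\SC^{\ast\ast}$-fibration; the classification of its singular fibers in \cite{misu} (only types $\mathrm{III_1}$ and $\mathrm{IV_2}$ survive, and exactly one fiber of type $\mathrm{III_1}$ occurs) together with Fujita's adjoint calculus for admissible linear chains (Lemmas~\ref{lem:indf}--\ref{lem:adj} and Corollary~\ref{cor:bu}) then determines the dual graph of $D$ completely and matches it against Lemma~\ref{lem:oc}. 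Without this fibration, or an equivalent structural input replacing it, your outline cannot be completed.
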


For a plane curve $C$,
we denote by $\PG_m(\SP^2\setminus C)$
the logarithmic $m$-genus
of the complement $\SP^2\setminus C$.
In \cite{ko2},
the curve $C_4$ was characterized by $\KB$ and $\PG_4$.
The following theorem characterizes
$C_4$ and $C_4^{\ast}$
by $\KB$,  $\PG_2$ and $\PG_3$.
\begin{theorem}\label{thm:pm}
A reduced plane curve $C$ is projectively equivalent to $C_4$ or $C_4^{\ast}$
if and only if
$\KB(\SP^2\setminus C)\ge 0$ and
$\PG_2(\SP^2\setminus C)=\PG_3(\SP^2\setminus C)=0$.
\end{theorem}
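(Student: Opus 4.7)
The plan is to prove the two implications separately.

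For the ``only if'' direction, I would use the explicit description of $C_4$ and $C_4^{\ast}$ from Section~\ref{sec:or}: both have $\KB = 2$ by Orevkov's construction and Theorem~\ref{thm1}, so $\KB \ge 0$ is immediate. From the minimal embedded resolution of the cusp one obtains the minimal SNC completion $(\overline{X}, D)$ of $\SP^2 \setminus C_i$ for $i \in \{4, 4^{\ast}\}$, and a direct computation of $h^0(\overline{X}, m(K_{\overline{X}} + D))$ for $m=2,3$ --- either by Riemann--Roch together with Kawamata--Viehweg vanishing, or via the Zariski decomposition of $K_{\overline{X}} + D$ --- should confirm $\PG_2 = \PG_3 = 0$.

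For the ``if'' direction, the strategy is to reduce to Theorem~\ref{thm1}. First I would show that the hypotheses force $C$ to be a rational unicuspidal plane curve with $\KB = 2$. The vanishing of $\PG_2$ and $\PG_3$ is very restrictive: extra irreducible components, non-cuspidal singularities, or positive geometric genus of a component would each contribute sections to $m(K+D)$ for small $m$, contradicting vanishing. The case $\KB = 0$ is ruled out by \cite{ts}, and $\KB = 1$ by checking that every curve in the classification \cite{to:uck1} has $\PG_2 > 0$ or $\PG_3 > 0$.

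Once $C$ is rational unicuspidal with $\KB = 2$, the next step is to deduce $(C')^2 = -2$ from the plurigenus vanishing, so that Theorem~\ref{thm1} can be applied. This uses the Zariski decomposition of $K_{\overline{X}} + D$ combined with the combinatorial structure of the exceptional divisor over the cusp: $\PG_m$ can be written as a function of $(C')^2$ and the multiplicity sequence of the cusp, and is monotone enough that $(C')^2 < -2$ forces $\PG_2 > 0$ or $\PG_3 > 0$. Theorem~\ref{thm1} then makes $C$ projectively equivalent to some $C_{4k}$ or $C_{4k}^{\ast}$, and an explicit computation on the Orevkov resolutions eliminates every $k \ge 2$.

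The principal obstacle I anticipate is the ruling out of the $\KB = 1$ case from the classification in \cite{to:uck1}, since that is an infinite family; what is really needed is a uniform lower bound on $\PG_2$ or $\PG_3$ in terms of invariants of the cusp or of the boundary divisor, rather than case-by-case inspection. The deduction of $(C')^2 = -2$ from $\PG_2 = \PG_3 = 0$ is the other delicate point, as it rests on making the dependence of $\PG_m$ on $(C')^2$ sufficiently explicit.
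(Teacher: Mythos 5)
Your outline has the same architecture as the paper's proof --- first force $C$ to be rational unicuspidal with $\KB=2$ and $(C')^2=-2$, then invoke Theorem~\ref{thm1} and discard $k\ge2$ --- but the two steps you yourself flag as ``principal obstacles'' are left genuinely open, and they are exactly where the content of the theorem lies. The paper does not reprove them: it imports Kojima's plurigenus computations wholesale. The ``only if'' direction is precisely \cite[Lemma 4.4]{ko2}, and for the ``if'' direction, Theorem 3.1, Lemma 4.2, Lemma 4.5 and Lemma 4.6 of \cite{ko2} together show that $\KB\ge0$ and $\PG_2=\PG_3=0$ force $C$ to be rational and unicuspidal with $\KB=2$, $(C')^2=-2$, \emph{and} with linear dual graph of the exceptional divisor over the cusp. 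Your plan to rule out $\KB=1$ by inspecting the infinite classification of \cite{to:uck1}, and to extract $(C')^2=-2$ from a claimed monotone dependence of $\PG_m$ on $(C')^2$, are plausible programs, but you supply no argument for either; the uniform lower bound you say is ``really needed'' is indeed needed, and without it the plan does not close.

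The final elimination of $k\ge2$ also differs. You propose to compute $\PG_2$ or $\PG_3$ on the Orevkov resolutions for every $k\ge2$ --- yet another computation you do not carry out. The paper gets this step for free: the linearity of the exceptional graph supplied by Kojima's lemmas, combined with Lemma~\ref{lem:oc} (which shows that $C_{4k}$ and $C_{4k}^{\ast}$ for $k\ge2$ have $g=2$ and hence a branched exceptional graph), leaves only $C_4$ and $C_4^{\ast}$ among the curves produced by Theorem~\ref{thm1}. In short: right skeleton, but the load-bearing plurigenus estimates are missing from your proposal, whereas in the paper they are cited from \cite{ko2} rather than derived.
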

\section{Preliminaries}
In this section, we prepare for the proof of our theorems.
%
\subsection{Linear chains}
Let $D$ be a divisor on a smooth surface $V$,
$\varphi:V'\rightarrow V$ a composite of
successive blowings-up
and $B\subset V'$ a divisor.
We say that
$\varphi$ \emph{contracts} $B$ to $D$,
or simply that $B$ \emph{shrinks to} $D$
if
$\varphi(\supp{B})=\supp{D}$ and
each center of blowings-up of $\varphi$
is on $D$ or one of its preimages.
Let $D_1,\ldots,D_r$ be the irreducible components of $D$.
We call $D$ an \emph{SNC-divisor} if
$D$ is a reduced effective divisor,
each $D_i$ is smooth,
$D_iD_j\le 1$ for distinct $D_i,D_j$,
and $D_i\cap D_j\cap D_k=\emptyset$ for distinct $D_i,D_j,D_k$.
Assume that $D$ is an SNC-divisor
and that each $D_i$ is projective.
Let $\Gamma=\Gamma(D)$ denote the dual graph of $D$.
We give the vertex corresponding to a component $D_i$
the weight $D_i^2$.
We sometimes do not distinguish between $D$
and its weighted dual graph $\Gamma$.
We use the following notation and terminology
(cf.~\cite[Section 3]{fu} and \cite[Chapter 1]{mits1}).
A blowing-up at a point $P\in D$
is said to be \emph{sprouting} (resp.~\emph{subdivisional})
\emph{with respect to} $D$
if $P$ is a smooth point (resp.~node) of $D$.
A component $D_i$ is called a \emph{branching component} of $D$
if $D_i(D-D_i)\ge 3$.

Assume that $\Gamma$ is connected and linear.
In cases where $r>1$,
the weighted linear graph $\Gamma$ together with
a direction from an endpoint to the other
is called a \emph{linear chain}.
By definition,
the empty graph $\emptyset$
and a weighted graph consisting of a single vertex without edges
are linear chains.
If necessary, renumber $D_1,\ldots,D_r$ 
so that the direction of the linear chain $\Gamma$ is from $D_1$ to $D_r$
and $D_iD_{i+1}=1$ for $i=1,\ldots,r-1$.
We denote $\Gamma$ by $[-D_1^2,\ldots,-D_r^2]$.
We sometimes write $\Gamma$ as $[D_1,\ldots,D_r]$.
The linear chain is called \emph{rational} if every $D_i$ is rational.
In this note, we always assume that every linear chain is rational.
The linear chain $\Gamma$ is called \emph{admissible} 
if it is not empty and $D_i^2\le -2$ for each $i$.
Set $\NV(\Gamma)=r$.
We define
the \emph{discriminant} $d(\Gamma)$ of $\Gamma$
as the determinant of the $r\times r$ matrix $(-D_i D_j)$.
We set $d(\emptyset)=1$.

Let $A=[a_1,\ldots,a_r]$ be a linear chain.
We use the following notation if $A\ne\emptyset$:
\[
\TP{A}:=[a_r,\ldots,a_1],\ 
\overline{A}:=[a_2,\ldots,a_r],\ 
\underline{A}:=[a_1,\ldots,a_{r-1}].
\]
The discriminant $d(A)$ has the following properties (\cite[Lemma 3.6]{fu}).
\begin{lemma}\label{lem:det1}
Let $A=[a_1,\ldots,a_r]$ be a linear chain.
\begin{enumerate}
\item[\textnormal{(i)}]
If $r>1$, then
$d(A)=a_1 d(\overline{A})-d(\overline{\overline{A}})=d(\TP{A})=a_r d(\underline{A})-d(\underline{\underline{A}})$.
\item[\textnormal{(ii)}]
If $r>1$, then
$d(\overline{A})d(\underline{A})-d(A)d(\underline{\overline{A}})=1$.
\item[\textnormal{(iii)}]
If $A$ is admissible,
then $\gcd(d(A),d(\overline{A}))=1$ and $d(A)>d(\overline{A})>0$.
\end{enumerate}
\end{lemma}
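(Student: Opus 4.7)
The plan is to prove the three assertions essentially by cofactor expansion and induction on $r=\NV(A)$, which is the standard approach for discriminants attached to linear chains. The intersection matrix associated to $A=[a_1,\ldots,a_r]$ is the tridiagonal symmetric matrix $M(A)$ with $a_i$ on the diagonal and $-1$ on the off-diagonal, and $d(A)=\det M(A)$.

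For (i), I would expand $\det M(A)$ along the first row. The only nonzero entries in the first row are $a_1$ and $-1$ in positions $(1,1)$ and $(1,2)$, respectively; expanding gives
\[
d(A)=a_1\det M(\overline{A})-(-1)\cdot(-1)\det M(\overline{\overline{A}})=a_1 d(\overline{A})-d(\overline{\overline{A}}),
\]
where the second cofactor contributes $(-1)(-1)=1$ from the off-diagonal entries. The symmetric expansion along the last row yields $d(A)=a_r d(\underline{A})-d(\underline{\underline{A}})$. The identity $d(A)=d(\TP{A})$ is immediate since $M(\TP{A})$ is obtained from $M(A)$ by a simultaneous permutation of rows and columns (reversing the order), which preserves the determinant; equivalently, $M(A)$ being symmetric, $d(A)=\det M(A)=\det \TP{M(A)}$ and one reads off $\TP{M(A)}$ as $M(\TP{A})$ up to the same reordering.

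For (ii), I would induct on $r$. For $r=2$, $A=[a_1,a_2]$, and a direct check gives $d(\overline{A})d(\underline{A})-d(A)d(\underline{\overline{A}})=a_2\cdot a_1-(a_1a_2-1)\cdot 1=1$, using $d(\emptyset)=1$. For the inductive step, I apply (i) to expand $d(A)=a_1 d(\overline{A})-d(\overline{\overline{A}})$ and $d(\underline{A})=a_1 d(\underline{\overline{A}})-d(\underline{\overline{\overline{A}}})$, substitute into the left-hand side, and collect terms so that the induction hypothesis applied to $\overline{A}$ (namely $d(\overline{\overline{A}})d(\underline{\overline{A}})-d(\overline{A})d(\underline{\overline{\overline{A}}})=1$) finishes the computation.

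For (iii), I again induct on $r$, assuming $a_i\ge 2$ for all $i$. The positivity $d(\overline{A})>0$ and the inequality $d(A)>d(\overline{A})$ both follow from the recursion $d(A)=a_1 d(\overline{A})-d(\overline{\overline{A}})$: using the inductive hypothesis $d(\overline{A})>d(\overline{\overline{A}})\ge 0$ together with $a_1\ge 2$, one gets $d(A)\ge 2d(\overline{A})-d(\overline{\overline{A}})>d(\overline{A})$. Coprimality is then immediate from (ii), since any common divisor of $d(A)$ and $d(\overline{A})$ would divide the right-hand side $1$. I do not anticipate a serious obstacle; the only care needed is the book-keeping of base cases ($r=1$ for (iii), $r=2$ for (ii)) and the correct interpretation of $\overline{\overline{A}}$ when $r=2$ as $\emptyset$ with $d(\emptyset)=1$.
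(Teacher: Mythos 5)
Your proof is correct. The paper itself gives no argument for this lemma --- it is quoted directly from Fujita \cite[Lemma 3.6]{fu} --- and your cofactor expansion of the tridiagonal matrix $M(A)$ together with induction on $r$ (reducing (ii) for $A$ to (ii) for $\overline{A}$, and deducing coprimality in (iii) from (ii)) is exactly the standard argument one would find there; all signs and base cases check out.
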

%

Let $A=[a_1,\ldots,a_r]$ be an admissible linear chain.
The rational number $e(A):=d(\overline{A})/d(A)$
is called the \emph{inductance} of $A$.
By \cite[Corollary 3.8]{fu}, the function
$e$ defines a one-to-one correspondence between the
set of all the admissible linear chains and the set of rational numbers
in the interval $(0,1)$.
For a given admissible linear chain $A$,
the admissible linear chain $A^{\ast}:=e^{-1}(1-e(\TP{A}))$ is called
the \emph{adjoint} of $A$ (\cite[3.9]{fu}).
Admissible linear chains and their adjoints have the following properties
(\cite[Corollary 3.7, Proposition 4.7]{fu}).
\begin{lemma}\label{lem:indf}
Let $A$ and $B$ be admissible linear chains.
\begin{enumerate}
\item[\textnormal{(i)}]
If $e(A)+e(B)=1$, then $d(A)=d(B)$ and $e(\TP{A})+e(\TP{B})=1$.
\item[\textnormal{(ii)}]
We have $A^{\ast\ast}=A$, $\TP{(A^{\ast})}=(\TP{A})^{\ast}$ and
$d(A)=d(A^{\ast})=d(\overline{A^{\ast}})+d(\underline{A})$.
\item[\textnormal{(iii)}]
The linear chain $[A,1,B]$ shrinks to $[0]$
if and only if $A=B^{\ast}$.
\end{enumerate}
\end{lemma}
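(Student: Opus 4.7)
Parts (i) and (ii) should follow quickly from Lemma~\ref{lem:det1}. For (i), the key is that $e(A) = d(\overline{A})/d(A)$ is already in lowest terms by Lemma~\ref{lem:det1}(iii), so $e(A)+e(B)=1$ immediately forces $d(A)=d(B)$ and $d(\overline{B})=d(A)-d(\overline{A})$. For the second assertion I would rewrite $e(\TP{A})=d(\underline{A})/d(A)$ (using $d(\TP{A})=d(A)$ and $\overline{\TP{A}}=\TP{\underline{A}}$), invoke the congruence $d(\overline{A})d(\underline{A})\equiv 1\pmod{d(A)}$ coming from Lemma~\ref{lem:det1}(ii) on both $A$ and $B$, and combine them to get $d(A)\mid d(\underline{A})+d(\underline{B})$; the strict bounds $0<d(\underline{A}),d(\underline{B})<d(A)$ then force $d(\underline{A})+d(\underline{B})=d(A)$. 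Part (ii) is then formal: applying (i) to the pair $(A^{\ast},\TP{A})$, which satisfies $e(A^{\ast})+e(\TP{A})=1$ by definition of the adjoint, yields $d(A^{\ast})=d(A)$ at once, while $A^{\ast\ast}=A$ and $\TP{(A^{\ast})}=(\TP{A})^{\ast}$ drop out by tracking inductances and using bijectivity of $e$; the final identity $d(A)=d(\overline{A^{\ast}})+d(\underline{A})$ comes from multiplying $e(A^{\ast})=1-e(\TP{A})$ through by $d(A)$.

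For (iii) I would argue by induction on $\NV(A)+\NV(B)$. The key structural ingredient is: writing $e(A^{\ast})=1-e(\TP{A})=(d(A)-d(\underline{A}))/d(A)$ and letting $c$ denote the first entry of $A^{\ast}$, the bound $e(A^{\ast})<1/(c-1)$ from Lemma~\ref{lem:det1}(i) gives $d(\underline{A})>d(A)(c-2)/(c-1)$, which for $c\ge 3$ exceeds $d(A)/2$; combined with $d(A)=a_r d(\underline{A})-d(\underline{\underline{A}})\ge(a_r-1)d(\underline{A})$, this forces $a_r\le 2$. Equivalently, $a_r\ge 3$ implies that the first entry of $A^{\ast}$ equals $2$. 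Now given $A=B^{\ast}$ (so $A^{\ast}=B$), this forces $a_r=2$ or $b_1=2$; by the reversal symmetry $(A,B)\mapsto(\TP{B},\TP{A})$, which preserves the adjoint relation by Part (ii), I may assume $a_r=2$. In $[A,1,B]$ the only $(-1)$-curve is the middle one, so the first contraction is forced, producing $[a_1,\ldots,a_{r-1},1,b_1-1,b_2,\ldots,b_s]$. When $r>1$ and $b_1>2$ this takes the form $[A_0,1,B_0]$ with $A_0=[a_1,\ldots,a_{r-1}]$ and $B_0=[b_1-1,b_2,\ldots,b_s]$ both admissible, and the equivalence $A=B^{\ast}\Leftrightarrow A_0=B_0^{\ast}$ reduces to the adjoint recursion ``$[C,2]^{\ast}$ is obtained from $C^{\ast}$ by increasing its first entry by $1$,'' which is a short calculation from the identities in (ii). Induction then closes both directions; the base $\NV(A)=\NV(B)=1$ forces $A=B=[2]$, and the explicit reduction $[2,1,2]\to[1,1]\to[0]$ handles it.

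The main obstacle is (iii). The delicate points are the degenerate boundary configurations ($r=1$, $s=1$, or $b_1=2$): in these cases the chain obtained after the first contraction acquires two adjacent $(-1)$-curves, or one of the side chains $A_0$, $B_0$ becomes empty and the general inductive step no longer applies. These configurations must be analysed by hand and serve as the base cases of the induction, verifying that the adjoint equivalence $A=B^{\ast}$ is preserved across each such reduction.
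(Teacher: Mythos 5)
The paper offers no proof of this lemma: it is imported wholesale from \cite[Corollary 3.7, Proposition 4.7]{fu}, so there is no in-paper argument to compare yours against. Judged on its own terms, your treatment of (i) and (ii) is correct and essentially complete. The reduction of $e(\TP{A})+e(\TP{B})=1$ to the congruence $d(\overline{A})d(\underline{A})\equiv 1\pmod{d(A)}$ from Lemma~\ref{lem:det1}(ii), together with the strict bounds $0<d(\underline{A}),d(\underline{B})<d(A)$ from part (iii) of that lemma, is exactly the right mechanism, and (ii) does then drop out formally by applying (i) to the pair $(A^{\ast},\TP{A})$. The only cosmetic caveat is the case $\NV(A)=1$, where Lemma~\ref{lem:det1}(ii) does not literally apply but the congruence holds trivially because $d(\overline{A})=d(\underline{A})=1$.

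For (iii), your induction on $\NV(A)+\NV(B)$ mirrors the scheme the paper does write out for the closely analogous Lemma~\ref{lem:bu2} (forced first contraction, trichotomy on whether $a_r$ or $b_1$ equals $2$), and your structural claim that $a_r\ge 3$ forces the first entry of $A^{\ast}$ to be $2$ is correct as derived. The recursion ``$[C,2]^{\ast}$ increases the first entry of $C^{\ast}$ by one'' is indeed a short computation from (i), (ii) and Lemma~\ref{lem:det1}; be careful only to derive it directly rather than quoting Lemma~\ref{lem:adj}(i), which the paper deduces \emph{from} the present lemma. The one place where the proposal stops short of a proof is the subcase $a_r=b_1=2$, which you defer to a by-hand check. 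Two things must be supplied there. First, that $A=B^{\ast}$ with both boundary entries equal to $2$ forces $A=B=[2]$; this follows from your own structural ingredient applied at both ends. Second, and less trivially, that for $(A,B)\ne([2],[2])$ the chain $[\underline{A},1,1,\overline{B}]$ cannot reach $[0]$. A clean way to finish: after contracting one of the two adjacent $(-1)$-curves one obtains a chain with at least two components, one of which has self-intersection $0$; that component is never a $(-1)$-curve, hence survives every subsequent contraction, and to isolate it one must eventually contract one of its neighbours, which raises its self-intersection to at least $+1$, where it remains --- so the final single component cannot be $[0]$. The paper's proof of Lemma~\ref{lem:bu2} dismisses the same configuration in one line because its target $[c,1]$ makes the obstruction immediate; for the target $[0]$ this extra argument is genuinely needed. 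With that inserted, your proof closes.
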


For integers $a$, $n$ with $n\ge 0$, we define 
$[(a)_n]=[\overbrace{a,\ldots,a}^n]$, $\TW{n}=[2_n]$.
For non-empty linear chains $A=[a_1,\ldots,a_r]$, $B=[b_1,\ldots,b_s]$,
we write
$A\TA B=[\underline{A},a_r+b_1-1,\overline{B}]$,
$A^{\ast n}=\overbrace{A\TA\cdots\TA A}^n$, where $n\ge 1$.
We remark that $(A\TA B)\TA C=A\TA(B\TA C)$
for non-empty linear chains $A$, $B$ and $C$.
By using Lemma~\ref{lem:det1} and Lemma~\ref{lem:indf},
we can show the following lemma.
\begin{lemma}\label{lem:adj}
Let $A=[a_1,\ldots,a_r]$ be an admissible linear chain.
\begin{enumerate}
\item[(i)]
For a positive integer $n$, we have $[A,n+1]^{\ast}=\TW{n}\TA A^{\ast}$.
\item[(ii)]
We have $A^{\ast}=\TW{a_r-1}\TA\cdots\TA\TW{a_1-1}$.
\item[(iii)]
If there exist positive integers $m$, $n$ such that
$[A,m+1]=[n+1,A]$
(resp.~$A\TA\TW{m}=\TW{n}\TA A$),
then $m=n$,
$a_1=\cdots=a_r=n+1$
(resp.~$A=\TW{n}^{\ast\NV(A^{\ast})}$).
\end{enumerate}
\end{lemma}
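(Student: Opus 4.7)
Part (i) can be established directly from Lemma~\ref{lem:indf}(iii): it suffices to verify that
\[
[A,n+1,1,\TW{n}\TA A^{\ast}]=[A,n+1,1,(2)_{n-1},(A^{\ast})_1+1,\overline{A^{\ast}}]
\]
shrinks to $[0]$, since then $\TW{n}\TA A^{\ast}$ is the adjoint of $[A,n+1]$. The chain contains a unique $(-1)$-curve, sitting just to the right of $n+1$; contracting it decreases $n+1$ to $n$ and turns the adjacent $2$ into a new $(-1)$-curve. Iterating this $n$ times consumes the string of $2$'s and leaves $[A,1,A^{\ast}]$, which shrinks to $[0]$ by the same lemma and $A^{\ast\ast}=A$. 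This step is essentially bookkeeping.

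For (ii), I would induct on $r=\NV(A)$. The base case $r=1$, namely $[a_1]^{\ast}=\TW{a_1-1}$, follows from a direct check that $[a_1,1,\TW{a_1-1}]$ shrinks to $[0]$ (equivalently, from the argument for (i) with the convention $\emptyset^{\ast}=\emptyset$). For $r\geq 2$, apply (i) with $\underline{A}$ in place of $A$ and $n+1=a_r$ to obtain $A^{\ast}=\TW{a_r-1}\TA\underline{A}^{\ast}$, and then insert the inductive expression for $\underline{A}^{\ast}$.

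For (iii), the first relation $[A,m+1]=[n+1,A]$ is a coordinate comparison: position $1$ gives $a_1=n+1$, positions $2,\ldots,r$ give $a_i=a_{i-1}$, and the final position gives $m=n$. For the second relation $A\TA\TW{m}=\TW{n}\TA A$, comparing total lengths $r+m-1$ and $r+n-1$ forces $m=n$. The key move is to take adjoints of both sides. Applying (i) to $A^{\ast}$ and invoking $A^{\ast\ast}=A$ yields $[A^{\ast},n+1]^{\ast}=\TW{n}\TA A$, hence $(\TW{n}\TA A)^{\ast}=[A^{\ast},n+1]$; combining this with $\TP{(A\TA\TW{n})}=\TW{n}\TA\TP{A}$ and the identity $\TP{(X^{\ast})}=(\TP{X})^{\ast}$ of Lemma~\ref{lem:indf}(ii) analogously gives $(A\TA\TW{n})^{\ast}=[n+1,A^{\ast}]$. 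The hypothesis therefore becomes $[n+1,A^{\ast}]=[A^{\ast},n+1]$, and the already-proved first half of (iii), applied to $A^{\ast}$, forces $A^{\ast}=[(n+1)_s]$ with $s=\NV(A^{\ast})$. Iterating (i) then shows $[(n+1)_s]^{\ast}=\TW{n}^{\ast s}$, so $A=\TW{n}^{\ast s}$ as required. The main subtle point, I expect, is orchestrating these two adjoint computations so as to circumvent the need for a general formula for $(X\TA Y)^{\ast}$, which the excerpt does not supply.
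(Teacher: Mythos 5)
Your proposal is correct. Note that the paper itself gives no proof of this lemma: it only remarks that the statement ``can be shown by using Lemma~\ref{lem:det1} and Lemma~\ref{lem:indf},'' so there is nothing to compare line by line. Your argument is a complete and legitimate realization of that hint, resting almost entirely on Lemma~\ref{lem:indf}: the shrink criterion (iii) for part~(i) (the explicit contraction of $[A,n+1,1,\TW{n}\TA A^{\ast}]$ down to $[A,1,A^{\ast}]$ checks out, including the $n=1$ case where the string of $2$'s is empty), the involution $A^{\ast\ast}=A$ together with associativity of $\TA$ for part~(ii), and the combination of $A^{\ast\ast}=A$, $\TP{(A^{\ast})}=(\TP{A})^{\ast}$ and the identity $\TP{(A\TA\TW{n})}=\TW{n}\TA\TP{A}$ for part~(iii); the reduction of the second half of (iii) to the first half applied to $A^{\ast}$ is exactly the right move and does circumvent any need for a general adjoint formula for $\TA$-products. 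The only point worth flagging is cosmetic: you never actually need Lemma~\ref{lem:det1}, since your base case $[a_1]^{\ast}=\TW{a_1-1}$ and all other steps are handled by explicit contractions rather than discriminant or inductance computations; an alternative (equally valid) route would compute $e(\TP{[A,n+1]})$ via the recursions of Lemma~\ref{lem:det1}, which is presumably what the author had in mind by citing it.
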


The following two lemmas
describe the processes of contractions of special linear chains.
The first one can be proved easily.
We prove the second one.
\begin{lemma}\label{lem:bu1}
Let $A$ be an admissible linear chain and $B$ a non-empty linear chain.
Suppose that a composite $\pi$ of blowings-down contracts $[A,1]$ to $B$.
\begin{enumerate}
\item[(i)]
The linear chain $B$ is the image of the first $\NV(B)$ curves of $A$.
We have $A=B\TA\TW{n}$, where $n=\NV(A)+1-\NV(B)$.
\item[(ii)]
Every blowing-up of $\pi$ is sprouting with respect to $B$ or its preimage.
\item[(iii)]
The exceptional curve of each blowing-up of $\pi$
is a unique ($-1$)-curve in the preimage of $B$.
\end{enumerate}
Conversely,
$[B\TA\TW{n},1]$ shrinks to $B$
for a given positive integer $n$ and a non-empty linear chain $B$.
\end{lemma}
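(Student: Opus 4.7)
The plan is to proceed by induction on $n:=\NV(A)+1-\NV(B)$, the number of blowings-down in $\pi$, with the converse handled by a direct contraction calculation. The key observation is that since $A$ is admissible, every entry of $A$ is at least $2$, so the only $(-1)$-curve in $[A,1]$ is the last component. Consequently the first blowing-down of $\pi$ is forced to contract this endpoint, producing the chain $[a_1,\ldots,a_{r-1},a_r-1]$. Because the contracted curve was at the end of the chain, its image in the residual chain is a smooth point, so reversing this step gives a sprouting blowing-up whose exceptional curve is the unique $(-1)$-curve at that stage. This handles (ii) and (iii) for the first step.

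For the base case $n=1$, one identifies $B=[a_1,\ldots,a_{r-1},a_r-1]$ and reads off $A=B\TA\TW{1}$ from the definition of $\TA$. In the inductive step $n>1$, a second blowing-down requires a fresh $(-1)$-curve in $[a_1,\ldots,a_{r-1},a_r-1]$; since $a_1,\ldots,a_{r-1}\ge 2$, this forces $a_r=2$, so what remains is a contraction of $[A',1]$ with $A':=[a_1,\ldots,a_{r-1}]$ still admissible, now via $n-1$ blowings-down. The inductive hypothesis yields $A'=B\TA\TW{n-1}$ together with (ii) and (iii) for the later steps, and a short calculation gives $A=[A',2]=B\TA\TW{n}$.

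For the converse, one checks directly that $[B\TA\TW{n},1]=[\underline{B},b_s+1,2,\ldots,2,1]$, with $n-1$ trailing $2$'s before the final $1$, admits $n$ successive contractions of the rightmost $(-1)$-curve: each step turns the preceding trailing $2$ into a new $(-1)$-curve, and the final contraction reduces $b_s+1$ back to $b_s$, yielding $B$. There is no genuine obstacle in this argument; the only care needed is in tracking the $\TA$-notation across the inductive step and in observing that each contracted curve sits at the end of its chain, so every reverse blowing-up occurs at a smooth point of the next chain and is thus sprouting.
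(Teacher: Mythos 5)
Your proof is correct; the paper itself omits a proof of this lemma (it only remarks that it ``can be proved easily''), and your argument is the natural one: admissibility of $A$ forces the unique $(-1)$-curve to sit at the right end at every stage, so each contraction is the endpoint contraction $[\ldots,c,1]\mapsto[\ldots,c-1]$, from which (i)--(iii) and the converse follow by your induction. The only point worth making explicit is that in the inductive step the hypothesis $B\ne\emptyset$ guarantees $\NV(A)\ge n\ge 2$, so $A'=[a_1,\ldots,a_{r-1}]$ is indeed nonempty and admissible.
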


\begin{lemma}\label{lem:bu2}
Let $A$, $B$ be admissible linear chains
and $c$ a positive integer.
Suppose that a composite $\pi$ of blowings-down contracts
$[A,1,B]$ to $[c,1]$.
\begin{enumerate}
\item[(i)]
The first curve of $[c,1]$ is the image of the first curve of $A$.
We have $n:=\NV(A)-\NV(B^{\ast})\ge0$ and $A=[c,\TW{n}]\TA B^{\ast}$.
In particular, $n=0$ if $c=1$.
\item[(ii)]
The first $n$ blowings-up of $\pi$ are sprouting and
the remaining ones are subdivisional with respect to
$[c,1]$ or its preimages.
The composite of the subdivisional blowings-up contracts
$[A,1,B]$ to $[c,\TW{n},1]$.
\item[(iii)]
The exceptional curve of each blowing-up of $\pi$
is a unique ($-1$)-curve in the preimage of $[c,1]$.
\end{enumerate}
\end{lemma}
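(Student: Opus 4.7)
I argue by induction on $\NV(A)+\NV(B)$. Since $A$ and $B$ are admissible, the middle $(-1)$-curve of $[A,1,B]$ is its unique $(-1)$-curve, so the first blowing-down of $\pi$ contracts it, producing $[a_1,\ldots,a_{\NV(A)}-1,b_1-1,b_2,\ldots,b_{\NV(B)}]$. In the base case $\NV(A)=\NV(B)=1$, matching this with $[c,1]$ forces $b_1=2$ and $c=a_1-1$, giving $B^{\ast}=[2]$, $n=0$, and $A=[c+1]=[c,\TW{0}]\TA B^{\ast}$; claims (i)--(iii) follow.

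For the inductive step, the process must continue past the first contraction, so $a_{\NV(A)}=2$ or $b_1=2$. I establish (iii) by ruling out $a_{\NV(A)}=b_1=2$: this would yield two adjacent $1$-curves, and their next contraction would produce a $0$-curve, impossible in the preimage of $[c,1]$ since $c\ge 1$. The subcase $b_1=2$, $\NV(B)=1$ (where $a_{\NV(A)}\ge 3$ by (iii), forcing $\NV(A)\ge 2$) is handled directly by Lemma~\ref{lem:bu1} applied to the contraction $[a_1,\ldots,a_{\NV(A)}-1,1]\to[c,1]$, yielding $A=[c,\TW{\NV(A)-2},3]=[c,\TW{n}]\TA B^{\ast}$ with $n=\NV(A)-1$.

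In the remaining subcases ($b_1=2$ with $\NV(B)\ge 2$, or $a_{\NV(A)}=2$ with $\NV(A)\ge 2$ and $b_1\ge 3$), the chain after the first contraction has the form $[\widetilde A,1,\widetilde B]$ for admissible $\widetilde A$, $\widetilde B$ of strictly smaller total length. The inductive hypothesis gives $\widetilde A=[c,\TW{\widetilde n}]\TA\widetilde B^{\ast}$; using the adjoint identities of Lemma~\ref{lem:adj}(ii)---namely $B^{\ast}=\widetilde B^{\ast}\TA[2]$ in the first subcase, and $B^{\ast}=(\overline B)^{\ast}\TA\TW{b_1-1}$, $\widetilde B^{\ast}=(\overline B)^{\ast}\TA\TW{b_1-2}$ in the second---together with associativity of $\TA$, one translates this to $A=[c,\TW{n}]\TA B^{\ast}$ with $n=\widetilde n$.

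The main obstacle is the combinatorial bookkeeping in the inductive translation, particularly matching the parameters $(n,A,B)$ correctly through each subcase. Once (i) is established, (ii) follows: the subdivisional phase of $\pi$ mirrors the shrinking $[B^{\ast},1,B]\to[0]$ of Lemma~\ref{lem:indf}(iii) but terminates one step earlier at $[c,\TW{n},1]$, and the remaining $n$ blowings-down contract the $\TW{n}$ tail as sproutings at the right endpoint. The condition $n=0$ when $c=1$ follows from admissibility of $A$: for $n\ge 1$ the leading entry of $[c,\TW{n}]\TA B^{\ast}$ would equal $c=1$, contradicting $a_1\ge 2$.
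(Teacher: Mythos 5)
Your overall strategy --- induction on $\NV(A)+\NV(B)$, contracting the unique middle ($-1$)-curve first, ruling out $a_{\NV(A)}=b_1=2$ via the resulting $0$-curve, invoking Lemma~\ref{lem:bu1} for the degenerate subcase, and transporting the inductive formula through the adjoint identities of Lemma~\ref{lem:adj}~(ii) --- is exactly the paper's argument, and the bookkeeping you describe ($B^{\ast}=\widetilde B^{\ast}\TA[2]$, resp.\ $B^{\ast}=(\overline B)^{\ast}\TA\TW{b_1-1}$ and $\widetilde B^{\ast}=(\overline B)^{\ast}\TA\TW{b_1-2}$, with $n=\widetilde n$ in both subcases) checks out.

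There is, however, one genuine gap: your case analysis omits the subcase $a_{\NV(A)}=2$, $\NV(A)=1$, $b_1\ge 3$. You list the remaining subcases as ``$b_1=2$ with $\NV(B)\ge 2$, or $a_{\NV(A)}=2$ with $\NV(A)\ge 2$ and $b_1\ge 3$,'' but nothing forces $\NV(A)\ge 2$ when $a_{\NV(A)}=2$ and $b_1\ge3$. In that omitted case the chain after the first contraction is $[1,b_1-1,\overline B]$, whose ($-1$)-curve sits at the left end, so it is \emph{not} of the form $[\widetilde A,1,\widetilde B]$ with $\widetilde A$ admissible and the induction hypothesis cannot be invoked; your argument simply stops there. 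The case must be shown to be impossible by a separate argument: the paper reverses the chain, so that $[\TP{\overline B},b_1-1,1]$ shrinks to $[1,c]$, and applies Lemma~\ref{lem:bu1} to get $[\TP{\overline B},b_1-1]=[1,c]\TA\TW{s-1}$, whose first entry is $1$, contradicting the admissibility of $B$. You need to insert this (or an equivalent) step. Separately, your justification of (ii) is only a sketch --- asserting that the subdivisional phase ``mirrors'' the shrinking of $[B^{\ast},1,B]$ to $[0]$ --- whereas the clean route is to carry (ii) along in the induction (the first blowing-down of $\pi$ is subdivisional as a blowing-up with respect to $T$ since the contracted curve meets both neighbours, and $n=\widetilde n$ does the rest); but this is a matter of polish rather than a flaw in the idea.
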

\begin{proof}
Write $A=[a_1,\ldots,a_r]$, $B=[b_1,\ldots,b_s]$.
We prove the assertions by induction on $r+s\ge 2$.
After the first blowing-down of $\pi$,
$[A,1,B]$ becomes $T:=[\underline{A},a_r-1,b_1-1,\overline{B}]$.
The last blowing-up of $\pi$ satisfies (iii)
and is subdivisional with respect to $T$.
Suppose $r+s=2$.
We have $T=[c,1]$,
$\underline{A}=\overline{B}=\emptyset$, $b_1=2$ and $c=a_r-1$.
By Lemma~\ref{lem:adj}, we obtain $B^{\ast}=[2]$ and $n=0$.
Hence $A=[c]\TA\TW{1}=[c,\TW{n}]\TA B^{\ast}$.
The remaining assertions are clear in this case.
Assume $r+s\ge 3$.
We have $T\ne[c,1]$.
Since $A$ and $B$ are admissible,
$a_r$ or $b_1$ must be equal to $2$.
If $a_r=b_1=2$,
then $T=[\underline{A},1,1,\overline{B}]$,
which is contracted to $[\ldots,0,\ldots]$ by the second blowing-down.
But the latter linear chain cannot shrink to $[c,1]$.
Hence either $a_r$ or $b_1$ must be greater than $2$.

Case (1): $a_r=2$, $b_1>2$.
If $r=1$, then $[b_s,\ldots,b_2,b_1-1,1]$ shrinks to $[1,c]$.
By Lemma~\ref{lem:bu1},
$[b_s,\ldots,b_2,b_1-1]=[1,c]\TA\TW{s-1}$.
Thus $b_s=1$, which is a contradiction.
Hence $r>1$.
Since $\underline{A}$ is admissible,
we have $\underline{A}=[c,\TW{n'}]\TA[b_1-1,\overline{B}]^{\ast}$
by the induction hypothesis,
where $n'=r-\NV([b_1-1,\overline{B}]^{\ast})-1$.
Hence $A=[c,\TW{n'}]\TA[[b_1-1,\overline{B}]^{\ast},2]$.
By Lemma~\ref{lem:adj},
we obtain
$[[b_1-1,\overline{B}]^{\ast},2]=(\TW{1}\TA[b_1-1,\overline{B}])^{\ast}=B^{\ast}$
and
$\NV([b_1-1,\overline{B}]^{\ast})=\NV(B^{\ast})-1$.
The remaining assertions follow from the induction hypothesis.

Case (2): $a_r>2$, $b_1=2$.
If $s=1$, then $[\underline{A},a_r-1,1]$ shrinks to $[c,1]$.
By Lemma~\ref{lem:bu1},
$[\underline{A},a_r-1]=[c,1]\TA\TW{r-1}=[c,\TW{r-1}]$.
Hence $A=[c,\TW{r-1}]\TA\TW{1}=[c,\TW{r-1}]\TA B^{\ast}$.
The remaining assertions also follow from Lemma~\ref{lem:bu1} in this case.
If $s>1$, then
we have $[\underline{A},a_r-1]=[c,\TW{n'}]\TA(\overline{B})^{\ast}$
by the induction hypothesis,
where $n'=r-\NV((\overline{B})^{\ast})$.
By Lemma~\ref{lem:adj},
we obtain
$A=[c,\TW{n'}]\TA(\overline{B})^{\ast}\TA\TW{1}=[c,\TW{n'}]\TA[2,\overline{B}]^{\ast}=[c,\TW{n'}]\TA B^{\ast}$
and
$\NV((\overline{B})^{\ast})=\NV(B^{\ast})$.
The remaining assertions follow from the induction hypothesis.
\end{proof}

The following corollary to Lemma~\ref{lem:bu2}
describes the process of the contractions
of linear chains in Lemma~\ref{lem:indf} (iii).
\begin{corollary}\label{cor:bu0}
Let $A$ and $B$ be admissible linear chains.
Suppose that a composite $\pi$ of blowings-down contracts
$[A,1,B]$ to $[0]$.
\begin{enumerate}
\item[(i)]
The first blowing-up of $\pi$ is sprouting
with respect to $[0]$
and the remaining ones are subdivisional with respect to preimages of $[0]$.
\item[(ii)]
The exceptional curve of each blowing-up of $\pi$
except the first one
is a unique ($-1$)-curve in the preimage of $[0]$.
\end{enumerate}
\end{corollary}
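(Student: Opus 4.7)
The plan is to deduce the corollary immediately from Lemma~\ref{lem:bu2} by handling the very first blowing-up separately. By Lemma~\ref{lem:indf}(iii), the hypothesis that $[A,1,B]$ shrinks to $[0]$ forces $A=B^{\ast}$, so such a composite $\pi$ exists.

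First, I would argue that the initial blowing-up of $\pi$---the one sitting directly on top of $[0]$---must be sprouting. Since $[0]$ consists of a single smooth rational component, it has no node, so any blowing-up whose center lies on the divisor is automatically at a smooth point and hence sprouting with respect to $[0]$. After this blowing-up, the resulting configuration is $[1,1]$.

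The remaining blowings-up of $\pi$ now constitute a composite of blowings-down that contracts $[A,1,B]$ to $[1,1]$. This is precisely the setting of Lemma~\ref{lem:bu2} with $c=1$. Because that lemma explicitly states $n=0$ when $c=1$, part~(ii) of it yields that every remaining blowing-up of $\pi$ is subdivisional with respect to $[1,1]$ or one of its preimages---equivalently, with respect to a preimage of $[0]$, since $[1,1]$ is the total transform of $[0]$ after the first blowing-up. Part~(iii) then says that the exceptional curve of each such remaining blowing-up is the unique $(-1)$-curve in the corresponding preimage. These two statements are exactly (i) and (ii) of the corollary.

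The only point requiring a moment of care is the bookkeeping identification between ``preimages of $[1,1]$'' and ``preimages of $[0]$'' once the first sprouting blowing-up has been performed; but since everything that follows consists of subdivisional blowings-up at points of the existing divisor, these two total-transform systems coincide, and Lemma~\ref{lem:bu2}(iii) transfers directly. I do not foresee any real obstacle beyond invoking Lemma~\ref{lem:bu2} with the correct value of $c$.
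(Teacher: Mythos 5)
Your proof is correct and takes essentially the approach the paper intends: the paper states this result without proof as an immediate consequence of Lemma~\ref{lem:bu2}, and your argument---observing that the first blowing-up must be sprouting because the irreducible divisor $[0]$ has no nodes, and then applying Lemma~\ref{lem:bu2} with $c=1$ (hence $n=0$) to the remaining contraction of $[A,1,B]$ onto $[1,1]$---is exactly that deduction.
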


The next one is a corollary to
Lemma~\ref{lem:indf} (iii), Lemma~\ref{lem:bu1} and Lemma~\ref{lem:bu2}.
It will be used to describe the process of
the resolutions of cusps.
\begin{corollary}\label{cor:bu}
Let $a$ be a positive integer and $A$ an admissible linear chain.
Let $B$ be a linear chain which is empty or admissible.
Assume that
a composite $\pi$ of blowings-down contracts
$[A,1,B]$ to $[a]$
and that $[a]$ is the image of $A$ under $\pi$.
\begin{enumerate}
\item[(i)]
The linear chain $[a]$ is the image of the first curve of $A$.
There exits a positive integer $n$ such that
$A^{\ast}=[B,n+1,\TW{a-1}]$.
Moreover,
$A=[a]\TA\TW{n}\TA B^{\ast}$
if $B\ne\emptyset$.
\item[(ii)]
The first $n$ blowings-up of $\pi$ are sprouting and
the remaining ones are subdivisional with respect to
$[a]$ and its preimages.
The composite of the subdivisional blowings-up contracts
$[A,1,B]$ to $[[a]\TA\TW{n},1]$.
\item[(iii)]
The exceptional curve of each blowing-up of $\pi$
is a unique ($-1$)-curve in the preimage of $[a]$.
\end{enumerate}
Conversely,
$[[a]\TA\TW{n}\TA B^{\ast},1,B]$ shrinks to $[a]$
for given positive integers $a$, $n$ and an admissible linear chain $B$.
\end{corollary}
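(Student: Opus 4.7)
The plan is to split on whether $B$ is empty and invoke the two preceding lemmas after factoring off the last blowing-down of $\pi$. When $B = \emptyset$ the hypothesis becomes ``$[A, 1]$ shrinks to $[a]$'', and Lemma~\ref{lem:bu1} with target chain $[a]$ immediately yields $A = [a] \TA \TW{n}$ with $n = \NV(A) \ge 1$, together with parts~(ii), (iii) and the converse. When $B$ is non-empty I would observe that the final blowing-down $\pi_0$ of $\pi$ contracts a two-element chain to the single curve $[a]$; since $[a]$ is the image of the first curve of $A$, this intermediate chain must be $[c, 1]$ with the $c$-weighted component equal to that image, and contracting the $(-1)$-curve then forces $c = a + 1$.

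Letting $\pi'$ denote the composite of all blowings-down of $\pi$ except the final $\pi_0$, so that $\pi'$ contracts $[A, 1, B]$ to $[a+1, 1]$, I would apply Lemma~\ref{lem:bu2} with $c = a+1$ to obtain $A = [a+1, \TW{n'}] \TA B^{\ast}$ for some $n' \ge 0$. Setting $n := n' + 1 \ge 1$ and using the identity $[a+1, \TW{n-1}] = [a] \TA \TW{n}$ rewrites this as $A = [a] \TA \TW{n} \TA B^{\ast}$. Parts~(ii) and~(iii) then follow by gluing Lemma~\ref{lem:bu2}(ii), (iii) onto the observation that $\pi_0$ is sprouting with respect to $[a]$ and contracts the unique $(-1)$-curve of $[a+1, 1]$.

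For the formula $A^{\ast} = [B, n+1, \TW{a-1}]$ I would dualize the right-hand side: stripping off the trailing $2$'s by repeated application of Lemma~\ref{lem:adj}(i) reduces to $[B, n+1, \TW{a-1}]^{\ast} = [a] \TA [B, n+1]^{\ast}$, and one further use of Lemma~\ref{lem:adj}(i) rewrites $[B, n+1]^{\ast} = \TW{n} \TA B^{\ast}$; together with $A^{\ast\ast} = A$ from Lemma~\ref{lem:adj}(ii) this gives the claim (the degenerate cases $a = 1$ or $B = \emptyset$ handle themselves, since $[1] \TA X = X$). For the converse direction I would first shrink $[A, 1, B]$ to $[[a] \TA \TW{n}, 1]$ (the converse of Lemma~\ref{lem:bu2}, which is not stated explicitly but can be obtained by running contractions in parallel with the shrinkage $[B^{\ast}, 1, B] \to [0]$ from Corollary~\ref{cor:bu0} and stopping one step short) and then reduce $[[a] \TA \TW{n}, 1]$ to $[a]$ via the explicit converse of Lemma~\ref{lem:bu1}. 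The principal obstacle is establishing this converse of Lemma~\ref{lem:bu2}: the forward direction of the corollary is largely bookkeeping on top of the two preceding lemmas, whereas the converse forces one either to re-run the induction of Lemma~\ref{lem:bu2} or to construct the sequence of blowings-down by hand.
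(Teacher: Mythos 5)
Your proposal is correct and is essentially the derivation the paper intends: the paper gives no proof of this corollary, stating only that it follows from Lemma~\ref{lem:indf}~(iii), Lemma~\ref{lem:bu1} and Lemma~\ref{lem:bu2}, and your argument (peel off the final sprouting blow-down to reduce to a contraction onto $[a+1,1]$, apply Lemma~\ref{lem:bu2} with $c=a+1$, compute $A^{\ast}$ via Lemma~\ref{lem:adj}, and obtain the converse by running the contraction of $[B^{\ast},1,B]$ to $[0]$ in parallel and stopping one step short before finishing with the converse part of Lemma~\ref{lem:bu1}) is exactly the natural fleshing-out of that citation. The only phrasing to tighten is the identification of the intermediate chain: you should deduce that the $(a+1)$-curve sits on the $A$-side of $[a+1,1]$ from the \emph{hypothesis} that $[a]$ is the image of $A$ (ruling out the reversed orientation via Lemma~\ref{lem:bu2}~(i) applied to $\TP{[A,1,B]}$), since ``$[a]$ is the image of the first curve of $A$'' is part of the conclusion rather than a given.
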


The following corollary follows from Corollary~\ref{cor:bu} (ii).
\begin{corollary}\label{cor:bu2}
Let the notation and the assumption be as in Corollary~\ref{cor:bu}
and $b$ an integer.
Then $\pi$ contracts $[A,1,B,b]$ to $[a,b-n]$.
The second curve of $[a,b-n]$ is the image of the last curve of $[A,1,B,b]$.
\end{corollary}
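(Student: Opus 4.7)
My strategy is to decompose $\pi$ as in Corollary~\ref{cor:bu}~(ii) into a subdivisional stage $\pi_{\mathrm{sub}}$, which contracts $[A,1,B]$ to the intermediate chain $[[a]\TA\TW{n},1]$, followed by a sprouting stage $\pi_{\mathrm{s}}$ consisting of $n$ blowings-down that contract $[[a]\TA\TW{n},1]$ down to $[a]$; then I follow the extra curve of self-intersection $-b$ through both stages. By Corollary~\ref{cor:bu}~(iii), every blowing-down of $\pi$ contracts the unique $(-1)$-curve lying in the current preimage of $[a]$, so in particular the $(-b)$-curve, which is not in that preimage, is never itself contracted. Its image in the final surface is therefore a single curve adjacent to the image $[a]$ of the first component of $A$, which already yields the second sentence of the corollary.

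It remains to identify the new self-intersection of the image of $-b$. During $\pi_{\mathrm{sub}}$, each contracted $(-1)$-curve sits at a node of the preimage of $[a]$, so both of its neighbors in the current configuration belong to that preimage; since $-b$ is not in the preimage, the contraction cannot touch $-b$, and after $\pi_{\mathrm{sub}}$ the chain reads $[[a]\TA\TW{n},1,b]=[a+1,\TW{n-1},1,b]$. I then run the $n$ sprouting blowings-down on this chain in turn: at each step the unique $(-1)$-curve of the preimage is contracted, and because of the extra tail $-b$, this $(-1)$-curve has the current image of $-b$ as one of its two neighbors in the extended chain, so the contraction raises the self-intersection of that image by one. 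After all $n$ such contractions, the preimage has collapsed to $[a]$ and the final entry of the chain has become $b-n$, yielding $[a,b-n]$.

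The main delicate point is the claim that the subdivisional contractions leave $-b$ untouched, even though its initial neighbor, the last component of $B$, is itself affected by them and in fact becomes a $(-1)$-curve by the end of $\pi_{\mathrm{sub}}$. The dichotomy from Corollary~\ref{cor:bu}~(ii) and~(iii) makes this transparent: the last component of $B$ becomes, under $\pi_{\mathrm{sub}}$, the free-end $(-1)$-curve of $[[a]\TA\TW{n},1]$, so although it accumulates self-intersection during $\pi_{\mathrm{sub}}$ it is not itself contracted in that stage; it is contracted only in the first step of $\pi_{\mathrm{s}}$, which is precisely the first of the $n$ sprouting contractions that decrement the entry of the image of $-b$.
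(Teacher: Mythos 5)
Your argument is correct and is exactly the elaboration the paper intends: the paper offers no written proof beyond the remark that the corollary ``follows from Corollary~\ref{cor:bu}~(ii)'', and your splitting of $\pi$ into the subdivisional stage (which, contracting only $(-1)$-curves with both neighbours inside the preimage of $[a]$, cannot meet the $(-b)$-curve) followed by the $n$ sprouting contractions (each of which is adjacent to the image of the $(-b)$-curve and so raises its self-intersection by one) is precisely how that implication works. No gaps.
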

\subsection{%
Resolution of a cusp}\label{sec:cres}
Let $C$ be a curve on a smooth surface $V$.
Suppose that $C$ has a cusp $P$.
Let $\sigma:V'\rightarrow V$ be the minimal embedded resolution
of the cusp.
That is,
$\sigma$ is the composite of the shortest sequence
of blowings-up such that
the strict transform $C'$ of $C$ intersects $\sigma^{-1}(P)$ transversally.
Let
\(
V'=V_n\stackrel{\sigma_{n-1}}{\longrightarrow}V_{n-1}
\longrightarrow\cdots\longrightarrow
V_2\stackrel{\sigma_1}{\longrightarrow}
V_1\stackrel{\sigma_0}{\longrightarrow}V_0=V
\)
be the blowings-up of $\sigma$.
The following lemma follows from the assumptions that
$P$ is a cusp and $\sigma$ is minimal.
\begin{lemma}\label{lem:cres0}
For $i\ge1$, the strict transform of $C$ on $V_i$
intersects $(\sigma_0\circ\cdots\circ\sigma_{i-1})^{-1}(P)$
in one point, which is on the exceptional curve of $\sigma_{i-1}$.
The point of intersection is the center of $\sigma_{i}$ if $i<n$.
\end{lemma}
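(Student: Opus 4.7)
The plan is to prove both assertions by induction on $i$, exploiting the fact that a cusp is a locally irreducible singularity of $C$. I would write $C_i$ for the strict transform of $C$ on $V_i$, $E_i$ for the exceptional curve of $\sigma_{i-1}$ viewed on $V_i$, and $\pi_i$ for the composite $\sigma_0\circ\cdots\circ\sigma_{i-1}\colon V_i\to V_0$.

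For the base case $i=1$, the analytic germ of $C$ at $P$ is irreducible, hence has a single tangent direction, so its strict transform $C_1$ meets $E_1=\pi_1^{-1}(P)$ at precisely one point. If $n>1$, minimality of $\sigma$ forces that point to be the center of $\sigma_1$: away from it, $C_1$ is already smooth and disjoint from $\pi_1^{-1}(P)$, so any further blowing-up centered elsewhere would be superfluous in the shortest sequence achieving transversality with $\sigma^{-1}(P)$.

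For the inductive step ($i\ge 2$), I would assume the lemma for $i-1$, giving a single intersection point $Q_{i-1}\in C_{i-1}\cap\pi_{i-1}^{-1}(P)$ lying on $E_{i-1}$, which by hypothesis is the center of $\sigma_{i-1}$. Since blowing up preserves analytic irreducibility of a plane-curve germ, the germ of $C_{i-1}$ at $Q_{i-1}$ remains irreducible, and therefore $C_i$ meets $E_i$ at exactly one point. To rule out extra intersections of $C_i$ with the other components of $\pi_i^{-1}(P)$, I would use that $\sigma_{i-1}$ is an isomorphism outside $Q_{i-1}$: any such extra intersection would descend to an intersection of $C_{i-1}$ with $\pi_{i-1}^{-1}(P)$ distinct from $Q_{i-1}$, violating the induction hypothesis. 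If $i<n$, the same minimality argument as in the base case pins the center of $\sigma_i$ to this unique intersection point.

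The proof is essentially bookkeeping once one grants that analytic irreducibility is preserved by blowing up. That preservation is the subtlest ingredient, since it is exactly what forces the ``single point'' conclusion at every stage and thereby locks in both the location and the uniqueness of each successive center.
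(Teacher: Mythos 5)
Your proof is correct and follows essentially the same route as the paper, which gives no written argument but states that the lemma "follows from the assumptions that $P$ is a cusp and $\sigma$ is minimal" — exactly the two ingredients (irreducibility of the germ, hence of its successive strict transforms, and minimality pinning each center) that your induction makes explicit. No issues.
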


We prove the following lemma.
\begin{lemma}\label{lem:cres}
The following assertions hold (cf. \cite{bk,masa}).
\begin{enumerate}
\item[(i)]
The dual graph of $\sigma^{-1}(C)$ has the following shape,
where
$g\ge1$,
$D_0$ is the exceptional curve of $\sigma_{n-1}$
and
$A_1$ contains the exceptional curve of $\sigma_0$ by definition.
\begin{center}
\includegraphics{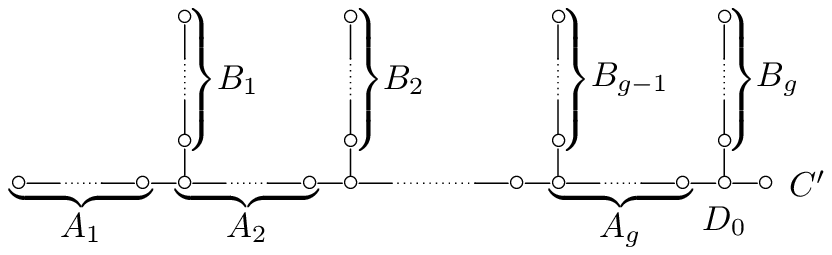}
\end{center}
We number the irreducible components $A_{i,j}$ of $A_i$
(resp.~$B_{i,j}$ of $B_i$)
from the left-hand side to the right
(resp.~the bottom to the top) in the above figure.
\item[(ii)]
The morphism $\sigma$ can be written as
$\sigma=\sigma_0\circ\rho_1'\circ\rho_1''\circ\cdots\circ\rho_g'\circ\rho_g''$,
where each $\rho_i'$ (resp.~$\rho_i''$) consists of
sprouting (resp.~subdivisional) blowings-up
of $\sigma$ with respect to preimages of $P$.
\item[(iii)]
The morphisms $\rho_i:=\rho_i'\circ\rho_i''$
have the following properties.
\begin{enumerate}
\item[(a)]
Each $\rho_i$ maps $A_i$ to a ($-1$)-curve,
which is the image of $A_{i,1}$.
\item[(b)]
$\rho_g$ contracts $A_g+D_0+B_g$ to $A_{g,1}$ and
$\rho_i$ contracts $A_{i}+A_{i+1,1}+B_{i}$ to $A_{i,1}$
for $i<g$.
\end{enumerate}
\end{enumerate}
\end{lemma}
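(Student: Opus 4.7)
The plan is to prove all three assertions simultaneously by induction on $g$. By Lemma~\ref{lem:cres0}, the center of each $\sigma_i$ for $i\ge1$ is uniquely determined as the intersection point of the strict transform of $C$ with the exceptional curve of $\sigma_{i-1}$. I would first classify each such $\sigma_i$ as \emph{sprouting} (its center is a smooth point of the current reduced exceptional divisor) or \emph{subdivisional} (its center is a node); the initial blowing-up $\sigma_0$ produces the exceptional curve that will ultimately appear as $A_{1,1}$.

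The first step is to show that $\sigma_1,\ldots,\sigma_{n-1}$ partition into consecutive groups $\rho_i=\rho_i'\circ\rho_i''$, with $\rho_i'$ a maximal block of sprouting blowings-up and $\rho_i''$ the subsequent maximal block of subdivisional ones. This is the classical combinatorial pattern for the resolution of a cusp (cf.~\cite{bk,masa}): each group corresponds to one characteristic pair in the Puiseux expansion of the cusp, and the alternation of sprouting and subdivisional blocks encodes the continued-fraction resolution of that pair. The number $g$ is therefore the number of such characteristic pairs, and $g\ge1$ because $P$ is a genuine singular point.

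The second step is to analyze a single group $\rho_i$. Running $\rho_i$ backwards as a sequence of contractions, the subdivisional phase $\rho_i''$ contracts a configuration of the form $[A,1,B]$ down to $[c,1]$ as described in Lemma~\ref{lem:bu2}, while the sprouting phase $\rho_i'$ contracts a chain of the form $[B\TA\TW{n},1]$ down to $B$ as described in Lemma~\ref{lem:bu1}. Combining these and applying Corollary~\ref{cor:bu} in reverse, I would conclude that $\rho_i$ contracts the divisor $A_i+A_{i+1,1}+B_i$ (or $A_g+D_0+B_g$ when $i=g$) to a single $(-1)$-curve, namely the image of $A_{i,1}$. This gives assertion (iii); assertion (i) then follows by assembling these local pictures along the spine $A_1,A_2,\ldots,A_g,D_0$, and assertion (ii) is the definition of the $\rho_i'$, $\rho_i''$.

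The main obstacle will be the first step: verifying rigorously the alternation of sprouting and subdivisional runs and showing that each $A_i$ and $B_i$ is in fact an admissible linear chain --- equivalently, that no $(-1)$-curve other than the images of the $A_{i,1}$ appears at any intermediate stage of the contraction. This admissibility reflects the minimality of $\sigma$ and can be established by tracking how the self-intersection numbers of exceptional curves evolve at each blowing-up and comparing them with the multiplicity sequence of the cusp as encoded in its Puiseux expansion.
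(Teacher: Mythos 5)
Your overall strategy --- classify each blowing-up after $\sigma_0$ as sprouting or subdivisional via Lemma~\ref{lem:cres0}, group them into maximal alternating blocks, and analyse one block at a time --- matches the paper's. But you have misplaced the difficulty. Assertion (ii) is not the hard part: by Lemma~\ref{lem:cres0} every centre lies on the exceptional curve of the previous blowing-up, so every blowing-up after $\sigma_0$ is either sprouting or subdivisional with respect to the preimage of $P$; the first one after $\sigma_0$ is sprouting (the exceptional divisor is then a single irreducible curve, hence has no nodes), and the last one is subdivisional because $P$ is a cusp and $\sigma$ is minimal. Any such sequence decomposes uniquely into maximal runs of sprouting blowings-up followed by maximal runs of subdivisional ones, which is precisely (ii). No appeal to Puiseux expansions or characteristic pairs is needed, and ``verifying the alternation of maximal runs'' is vacuous --- the endpoints are the only content.

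The genuine gap is in assertions (i) and (iii). You propose to obtain the local picture of each block by applying Lemma~\ref{lem:bu1}, Lemma~\ref{lem:bu2} and Corollary~\ref{cor:bu} ``in reverse'', but those statements take as a \emph{hypothesis} that the configuration being contracted is a linear chain of the form $[A,1,B]$ --- which is exactly the shape of the dual graph you are trying to establish; this is circular. The paper instead derives the shape directly from Lemma~\ref{lem:cres0}: the sprouting blowings-up of $\rho_i'$ successively attach new curves at smooth points of the newest exceptional curve, so they grow a chain out of $E_{i-1,0}$ (the exceptional curve of the last blowing-up of $\rho_{i-1}''$), and the subdivisional blowings-up of $\rho_i''$ only subdivide edges adjacent to the newest curve; hence $E_{i-1,0}$ plus the exceptional divisor of $\rho_i$ is a linear chain with $E_{i-1,0}$ as an endpoint and $E_{i,0}$ an interior vertex. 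Branching therefore occurs exactly at $E_{1,0},\ldots,E_{g,0}=D_0$, and $A_i$, $B_i$ are by definition the two connected components of this chain minus $E_{i,0}$; assertion (iii) is then just the statement that $\rho_i$ contracts its own exceptional divisor onto the $(-1)$-curve $\rho_i(E_{i-1,0})$. Only after this shape is known do the contraction lemmas legitimately enter, in Proposition~\ref{prop:cres}, to give the arithmetic relations $A_i=\TW{\NS_i}\TA B_i^{\ast}$. Your admissibility worry, finally, is easily dispatched without tracking multiplicity sequences: by Lemma~\ref{lem:cres0} every exceptional curve except $D_0$ is blown up again at a point lying on it, so its final self-intersection is at most $-2$.
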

\begin{proof}
For the sake of simplicity,
we do not distinguish between a curve and its strict transforms
via blowings-up.
The second blowing-up of $\sigma$ is sprouting
with respect to the exceptional curve of $\sigma_0$.
Since $P$ is a cusp and $\sigma$ is minimal,
the last blowing-up of $\sigma$ must be subdivisional with respect to
the preimage of $P$.
These facts show the assertion (ii).
Let $E_{0,0}$ denote the exceptional curve of $\sigma_0$ and
$E_{i,0}$ the exceptional curve of
the last blowing-up of $\rho_i''$ for each $i$.
Put $E_0=\emptyset$.
Let $E_i$ denote the exceptional curve of $\rho_i$.
By Lemma~\ref{lem:cres0},
we infer that
the dual graph of the sum of
$E_{i-1,0}$ and
the exceptional curve of $\rho_i'$ is linear.
Hence the dual graph of $E_{i-1,0}+E_i$ is linear.
It follows that
$E_{1,0},\ldots,E_{g-1,0},E_{g,0}=D_0$
are all the branching components of $\sigma^{-1}(C)$.
The divisor $E_{i-1,0}+E_i-E_{i,0}$ consists of
two connected components.
Let $A_i$ denote the one containing $E_{i-1,0}$
and $B_i$ the remaining one.
Then $A_i$, $B_i$ and $\rho_i$
have the desired properties.
\end{proof}

We give the weighted graphs $A_1,\ldots,A_g$ (resp.~$B_1,\ldots,B_g$)
the direction
from the left-hand side to the right
(resp.~from the bottom to the top) of the figure in Lemma~\ref{lem:cres}.
With these directions, we regard $A_i$ and $B_i$ as linear chains.
By Lemma~\ref{lem:cres0},
these linear chains are admissible.
Let $\NS_i$ denote the number of the blowings-up in $\rho_i'$.
The following proposition follows from Corollary~\ref{cor:bu}.
\begin{proposition}\label{prop:cres}
The following assertions hold for $i=1,\ldots,g$.
\begin{enumerate}
\item[(i)]
We have $A_i=\TW{\NS_i}\TA B_i^{\ast}$, $A_i^{\ast}=[B_i,\NS_i+1]$.
\item[(ii)]
The ($-1$)-curve $A_{i,1}$ is the image of the first curve of $A_i$
under $\rho_i$.
\item[(iii)]
The linear chain $A_i$ contains an irreducible component $E$ with $E^2\le-3$.
\end{enumerate}
\end{proposition}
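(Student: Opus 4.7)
The plan is to apply Corollary~\ref{cor:bu} to each contraction $\rho_i$ of Lemma~\ref{lem:cres}(iii)(b), and then deduce (iii) by an elementary branching-count argument. First I would set up the hypotheses of the corollary: $\rho_g$ contracts $[A_g, D_0, B_g]$ to $A_{g,1}$, and for $i < g$, $\rho_i$ contracts $[A_i, A_{i+1,1}, B_i]$ to $A_{i,1}$. The middle curve of this three-block chain is a $(-1)$-curve at the moment $\rho_i$ is performed: $D_0$ because it is the exceptional curve of the last blow-up $\sigma_{n-1}$, and $A_{i+1,1}$ by Lemma~\ref{lem:cres}(iii)(a). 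The target $A_{i,1}$ is also a $(-1)$-curve by (iii)(a). Thus Corollary~\ref{cor:bu} applies to $\pi = \rho_i$ with $a = 1$, $A = A_i$, $B = B_i$.

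Parts (i) and (ii) then follow directly. Corollary~\ref{cor:bu}(i) specialized to $a = 1$ yields $A_i^{\ast} = [B_i, n + 1, \TW{0}] = [B_i, n + 1]$, where by (ii) of the same corollary the integer $n$ is the number of sprouting blow-ups in $\rho_i$, namely $\NS_i$. Taking adjoints via Lemma~\ref{lem:adj}(i) gives $A_i = (A_i^{\ast})^{\ast} = \TW{\NS_i} \TA B_i^{\ast}$, completing (i). For (ii), the clause of Corollary~\ref{cor:bu}(i) that ``$[a]$ is the image of the first curve of $A$'' translates, with $[a] = A_{i,1}$ and $A = A_i$, to the statement that the $(-1)$-curve $A_{i,1}$ is the image of the first curve of $A_i$ under $\rho_i$.

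For (iii) the key step is to show $B_i \neq \emptyset$. Since $E_{i,0}$ (equal to $D_0$ when $i = g$) is a branching component of $\sigma^{-1}(C)$ by Lemma~\ref{lem:cres}, it has at least three neighbors in the dual graph: one in $A_i$, one in $A_{i+1}$ for $i < g$ or the strict transform $C'$ for $i = g$, and one more. The only place left for this third neighbor is in $B_i$, so $B_i \neq \emptyset$; consequently $B_i^{\ast}$ is admissible and non-empty with first entry $b_1^{\ast} \geq 2$. Noting that each sprouting run $\rho_i'$ in the alternating decomposition of Lemma~\ref{lem:cres}(ii) is non-empty (so $\NS_i \geq 1$), the expansion $A_i = \TW{\NS_i} \TA B_i^{\ast} = [\TW{\NS_i - 1}, b_1^{\ast} + 1, \overline{B_i^{\ast}}]$ exhibits a component $E$ of $A_i$ with $-E^2 = b_1^{\ast} + 1 \geq 3$. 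The main obstacle is this small combinatorial verification that $B_i \neq \emptyset$ and $\NS_i \geq 1$; once these are in hand, parts (i), (ii), and (iii) all reduce to bookkeeping from Corollary~\ref{cor:bu}.
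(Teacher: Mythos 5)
Your proposal is correct and follows essentially the same route as the paper, which disposes of this proposition with the single remark that it ``follows from Corollary~\ref{cor:bu}'': you apply that corollary with $a=1$ to each contraction $\rho_i$ of $[A_i,\,{-1},\,B_i]$ from Lemma~\ref{lem:cres}~(iii), identify $n$ with $\NS_i$ via part~(ii) of the corollary, and read off~(iii) from the expansion $\TW{\NS_i}\TA B_i^{\ast}$ using that $B_i$ is admissible (hence non-empty). Your write-up simply makes explicit the bookkeeping the paper leaves to the reader.
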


We will use the next lemma to prove 
some properties of the Orevkov's curves.
\begin{lemma}\label{lem:cres2}
Let $D'$ be an SNC-divisor on a smooth surface $V'$.
Suppose the following conditions are satisfied.
\begin{enumerate}
\item[(i)]
The weighted dual graph of $D'$ consists of a ($-1$)-curve $D_0$
and admissible rational linear chains $A_1,B_1,\ldots,A_g,B_g$, $g\ge1$.
They meet each other in the way described in Lemma~\ref{lem:cres} (i).
\item[(ii)]
For $i=1,\ldots,g$,
there exists a positive integer $\NS_i$ such that
$A_i=\TW{\NS_i}\TA B_i^{\ast}$,
or equivalently $A_i^{\ast}=[B_i,\NS_i+1]$.
\end{enumerate}
Then the following assertions hold.
\begin{enumerate}
\item[(a)]
The divisor $D'$ shrinks to a point $P$
by blowings-down $\sigma:V'\rightarrow V$.
The way of blowings-down to contract $D'$ to a point
is unique.
\item[(b)]
Let $C'$ be a smooth curve on $V'$.
If $C'$
intersects only $D_0$ at one point transversally
among the irreducible components of $D'$,
then $\sigma(C')$ is smooth outside of $P$
and has a cusp at $P$, whose
minimal embedded resolution coincides with $\sigma$.
\end{enumerate}
\end{lemma}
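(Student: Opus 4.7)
The plan is to prove (a) by a descending induction on $i$ from $g$ down to $1$. At stage $i$, the configuration $A_i$ plus the current middle $(-1)$-curve plus $B_i$ is contracted to a $(-1)$-curve (the image of $A_{i,1}$), starting from $D_0$ as the initial middle $(-1)$-curve at stage $g$. The main tool is the converse direction of Corollary~\ref{cor:bu}: since by hypothesis $A_i = \TW{\NS_i}\TA B_i^{\ast}$, which equals $[1]\TA\TW{\NS_i}\TA B_i^{\ast}$ because $[1]\TA X = X$, that corollary applies with $a=1$, $n=\NS_i$, $B=B_i$ and yields that $[A_i,1,B_i]$ shrinks to $[1]$, the surviving $(-1)$-curve being the image of $A_{i,1}$.

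I start at stage $g$ by contracting $A_g + D_0 + B_g$ to $A_{g,1}$. The self-intersections of the components lying outside this sub-configuration are unaffected because their only links to it pass through $A_{g,1}$, which is not contracted but survives; hence the residual divisor has the form $[A_{g-1},1,B_{g-1}]$ with middle $A_{g,1}$, and the induction proceeds. After stage $1$ only the $(-1)$-curve $A_{1,1}$ remains, which I then contract to a point $P$, defining $\sigma:V'\to V$. For uniqueness: initially $D_0$ is the only $(-1)$-curve in $D'$ because each $A_i,B_i$ is admissible; at every subsequent moment, Corollary~\ref{cor:bu}(iii) provides the unique $(-1)$-curve inside the currently-contracting sub-configuration, while the unchanged outside weights ensure no alternative $(-1)$-curve appears. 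This proves (a).

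For (b), the map $\sigma$ is an isomorphism off $D'$, so $\sigma(C')$ is smooth away from $P$. The curve $C'$ meets $D'$ transversally at a single point $Q\in D_0$ that avoids $A_g$ and $B_g$, so $Q$ is a smooth point of $D'$; this gives that $\sigma(C')$ has exactly one local branch at $P$, i.e., local irreducibility. Moreover $C'+D'$ is SNC on $V'$ with $C'$ smooth, so $\sigma$ is already an embedded resolution of $\sigma(C')$. For minimality, the only $(-1)$-curve in $D'$ is $D_0$, and $D_0$ meets three other components of $C'+D'$ (the last curve of $A_g$, the top curve of $B_g$, and $C'$), so contracting $D_0$ would force three curves to meet at a point and destroy the SNC property. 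Hence no blow-up in $\sigma$ can be removed, so $\sigma$ is the minimal embedded resolution. Since $\sigma$ is non-trivial ($D'$ has at least three components when $g\ge1$) and the minimal embedded resolution of a smooth point of a curve is the identity, $P$ must be singular; combined with local irreducibility, $P$ is a cusp.

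The main technical obstacle lies in the inductive step of (a): carefully verifying that after the stage-$i$ contraction the residual configuration really has the form $[A_{i-1},1,B_{i-1}]$ with $A_{i,1}$ sitting as the new middle $(-1)$-curve precisely between the last component of $A_{i-1}$ and the top component of $B_{i-1}$. This requires reading the shape in Lemma~\ref{lem:cres}(i) carefully and observing that the stage-$i$ contractions leave the self-intersections of the $A_j,B_j$ with $j<i$ untouched, because no component of those chains meets any curve in $A_i \cup \{\text{middle}\} \cup B_i$ except through the surviving $A_{i,1}$.
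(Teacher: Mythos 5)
Your proof is correct. Part (a) is essentially the paper's argument: the paper likewise applies Corollary~\ref{cor:bu} (in its converse form, with $a=1$, exactly as you do via $[1]\TA X=X$) to contract $[A_g,D_0,B_g]$ to the image of $A_{g,1}$, then iterates downward through $[A_{g-1},A_{g,1},B_{g-1}]$, and gets uniqueness from Corollary~\ref{cor:bu}~(iii) together with admissibility of the untouched chains. Where you genuinely diverge is in part (b). The paper's route is to show that \emph{every} blow-up center of $\sigma$ lies on the image of $C'$: if one center missed it, Corollary~\ref{cor:bu}~(iii) would force all subsequent centers to miss it as well, contradicting $C'D_0=1$; the cusp property and minimality are then read off from that fact. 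You instead argue minimality directly from the configuration on $V'$: since every component of $D'$ other than $D_0$ has self-intersection $\le -2$ and $D_0$ meets three components of $C'+D'$, no $(-1)$-curve can be contracted within the SNC category, so no blow-up is superfluous. Both arguments work; yours makes the minimality step more explicit (it does quietly use the standard fact that a non-minimal embedded resolution over $P$ always admits such a contractible $(-1)$-curve), while the paper's ``centers lie on the curve'' formulation is the one it reuses elsewhere. Your reference to the ``top'' rather than the ``bottom'' curve of $B_{i-1}$ as the neighbour of the middle $(-1)$-curve is an immaterial slip about the figure's orientation.
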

\begin{proof}
(a)
By Corollary~\ref{cor:bu},
$[A_g,D_0,B_g]$ shrinks to a ($-1$)-curve,
which is the image of the first curve $A_{g,1}$ of  $A_g$.
The image of
$[A_{g-1},A_{g,1},B_{g-1}]$
under the above contraction
shrinks to a ($-1$)-curve,
which is the image of the first curve of $A_{g-1}$.
Continuing in this way,
we get blowings-down $\sigma:V'\rightarrow V$
which contracts $D'$ to a point $P$.
The uniqueness follows from Corollary~\ref{cor:bu} (iii).

(b)
Since $C'$ is smooth,
$\sigma(C')$ is also smooth outside of $P$.
If the center of a blowing-up of $\sigma$
is not on the image of $C'$,
then those of the remaining blowings-up
are not on the images of $C'$
by Corollary~\ref{cor:bu} (iii).
This contradicts the assumption that
$C'$ intersects $D_0$.
Hence the center of each blowing-up of $\sigma$
is on the image of $C'$.
The remaining assertions of (b) follow from this fact.
\end{proof}
\section{Orevkov's curves and proof of the ``only if'' part of Theorem~1}\label{sec:or}
In this section, we prove some properties of Orevkov's curves,
from which the ``only if'' part of Theorem~\ref{thm1} follows.
In \cite{or},
Orevkov constructed
two sequences $C_{4k}$, $C_{4k}^{\ast}$ ($k=1,2,\ldots$) of
rational unicuspidal plane curves with $\KB=2$ in the following way.
Let $N$ be a nodal cubic.
Let $\Gamma_1$, $\Gamma_2$ denote the two analytic branches
of $N$ at the node.
Let $\phi:W\rightarrow\SP^2$ denote
the composite of $7$-times of blowings-up
such that the center of the first one is the node
and every center of the remaining ones is
the point of intersection
of the strict transform of $\Gamma_1$ and the exceptional curve
of the previous blowing-up.
The dual graph of the exceptional curve $E$ of $\phi$
is connected and linear.
The curve $E$
consists of
$6$-pieces of ($-2$)-curves and one ($-1$)-curve $E'$ as an endpoint
and
intersects the strict transform of $N$ at its two endpoints.

Let $\phi':W\rightarrow\SP^2$ denote
the contraction of the strict transform of $N$ and
the $6$-pieces of ($-2$)-curves in $E$.
Put $f=\phi'\circ\phi^{-1}$.
The curve $\phi'(E')$ is a nodal cubic.
Let $\Gamma$ denote one of the two analytic branches of $\phi'(E')$
at the node such that
the center of the second blowing-up of $\phi'$
is not on its strict transform.
We may assume
$\phi'(E')=N$ and $\Gamma=\Gamma_1$
by composing a suitable projective transformation to $f$.
Let $C_0$ be the tangent line at a flex of $N$ and
$C_0^{\ast}$ an irreducible conic meeting with $N$ only at one smooth point.
See \cite{or,at} or the appendix for the existence of $C_0^{\ast}$.
Orevkov defined $C_{4k}$, $C_{4k}^{\ast}$
as $C_{4k}=f(C_{4k-4})$, $C_{4k}^{\ast}=f(C_{4k-4}^{\ast})$ ($k=1,2,\ldots$).
They have a cusp at the node and tangent to $\Gamma_2$ at the node.
\begin{lemma}\label{lem:oc}
Let $C$ be a rational unicuspidal plane curve,
$\sigma:V\rightarrow\SP^2$ the minimal embedded resolution
of the cusp
and $C'$ the strict transform of $C$ via $\sigma$.
Put $D=\sigma^{-1}(C)$.
Let $A_1$, $B_1,\ldots,A_g,B_g,D_0$ denote the linear chains given
for the cusp by Lemma~\ref{lem:cres}.
\begin{enumerate}
\item[(i)]
The curve $C$ can be constructed in the same way as
$C_{4}$ (resp.~$C_4^{\ast}$)
if and only if $C$ satisfies the following conditions.
\begin{enumerate}
\item[(a)]
$g=1$, $A_1=[\TW{6},4]$, $B_1=\TW{2}$
(resp.~$A_1=[\TW{6},7]$, $B_1=\TW{5}$).
\item[(b)]
There exists a ($-1$)-curve $E_0$
such that
it meets with $D$ at two points transversally
and
intersects only the first curve and the last curve of $A_1$
among the irreducible components of $D$.
\end{enumerate}
\item[(ii)]
The curve $C$ can be constructed in the same way as
$C_{4k+4}$ (resp.~$C_{4k+4}^{\ast}$)
for some $k\ge 1$
if and only if $C$ satisfies the following conditions.
\begin{enumerate}
\item[(a)]
$g=2$, $A_1=\TW{6}^{\ast k+1}$,
$B_1=[7_k]$, $A_2=[4]$, $B_2=\TW{2}$
(resp.~$A_2=[7]$, $B_2=\TW{5}$).
\item[(b)]
There exists a ($-1$)-curve $E_0$
such that
it meets with $D$ at two points transversally
and
intersects only the first curve of $A_1$ and the last curve of $B_1$
among the irreducible components of $D$.
\end{enumerate}
\item[(iii)]
If $C$ can be constructed in the same way as
$C_{4k}$ or $C_{4k}^{\ast}$ for some $k\ge 1$,
then $(C')^2=-2$.
\end{enumerate}
\end{lemma}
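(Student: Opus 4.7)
\emph{Plan.} I will prove the three parts of Lemma~\ref{lem:oc} by explicitly computing the minimal embedded resolution of each Orevkov curve and matching the result against the stated dual graph, and then by reversing the construction to handle the converse directions; part (iii) will follow from the explicit chain shapes.

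\emph{Step 1 (analysis of $f$).} The exceptional divisor of $\phi$ is a chain $E_0,\ldots,E_6$ of self-intersections $-2,-2,-2,-2,-2,-2,-1$, and the strict transform of $N$ (a $(-1)$-curve on $W$) meets $E_0$ and $E_6$, closing this chain into a cycle; the morphism $\phi'$ collapses $N$ together with $E_0,\ldots,E_5$ to a single point, producing the new nodal cubic $\phi'(E')$. I first record how the local intersection behavior of a test curve with $N$ at the node translates, via $f$, into the cusp type of the image curve on the target $\SP^2$.

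\emph{Step 2 (base cases for (i)).} For $C_0$ a flex tangent (so $C_0\cdot N=3$ at a single smooth point of $N$), its strict transform on $W$ is disjoint from $E_0,\ldots,E_6$ and meets the strict transform of $N$ at one point with multiplicity $3$. Under $\phi'$ that point maps to the new node, so $C_4=f(C_0)$ acquires a cusp; reading off the dual graph of $\sigma^{-1}(C_4)$ using Lemma~\ref{lem:cres2} verifies $g=1$, $A_1=[\TW{6},4]$, $B_1=\TW{2}$, and the strict transform of $N$ becomes the $(-1)$-curve $E_0$ meeting only the first and last curves of $A_1$, as (b) requires. The analogous computation with $C_0^{\ast}$ (an irreducible conic meeting $N$ at one smooth point with $C_0^{\ast}\cdot N=6$) yields $A_1=[\TW{6},7]$, $B_1=\TW{5}$.

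\emph{Step 3 (iteration for (ii) and backward directions).} I proceed by induction on $k$. Assuming the dual graph of $C_{4k}$ has the form stated in (ii), I apply $f$ and track the intersections of the strict transform of $C_{4k}$ with that of $N$ through $\phi^{-1}$ followed by $\phi'$; this operation inserts one additional $\TW{6}$-factor into $A_1$ and one additional $7$ into $B_1$ in the precise way demanded by (ii), while leaving $A_2$ and $B_2$ intact. The converse directions of (i) and (ii) are handled uniformly using the $(-1)$-curve $E_0$ of condition (b): contracting $E_0$ together with the appropriate adjacent data, and invoking Corollary~\ref{cor:bu} together with the uniqueness statement in Lemma~\ref{lem:cres2}, recovers $(\SP^2, N)$ and the curve $C_0$, $C_0^{\ast}$, or $C_{4k-4}$ as appropriate.

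\emph{Step 4 (part (iii) and main obstacle).} With the chains $A_1,B_1,A_2,B_2$ determined by (i) and (ii), the multiplicity sequence at the cusp of $C$ is fully determined, and the formula $(C')^2=3d-2-\sum m_i$ (which follows from $(d-1)(d-2)/2=\sum m_i(m_i-1)/2$ for a rational unicuspidal curve) yields $(C')^2=-2$ by direct arithmetic in each case. The main obstacle is Step 3: showing that one application of $f$ acts on the dual graph exactly as described, so that the recursion produces $A_1=\TW{6}^{\ast k+1}$ and $B_1=[7_k]$ on the nose, will require delicate bookkeeping with the $\TA$-operation and the adjoint formalism of Lemma~\ref{lem:adj} and Corollary~\ref{cor:bu}.
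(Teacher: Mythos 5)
Your plan follows the paper's proof in its essentials: part (i) is read off from the definition of $C_4$ and $C_4^{\ast}$, and part (ii) is proved by induction on $k$, tracking the resolution forward through $f=\phi'\circ\phi^{-1}$ for the ``only if'' direction and, for the ``if'' direction, contracting $E_0$ together with adjacent components to recover a curve constructible as $C_{4k}$ (the paper contracts $E_0+a_1+\cdots+a_5$ and then $a_6$, reorders the blowings-down of $\varphi\circ\sigma_1$, and invokes Lemma~\ref{lem:cres2} exactly as you propose). The one genuine divergence is part (iii): you derive $(C')^2=-2$ from the genus formula $(C')^2=3d-2-\sum m_i$ applied to the multiplicity sequence encoded by the chains, whereas the paper gets $(C')^2=(C_{4k}')^2=-2$ inside the same induction from the observation that $\sigma'$ performs its blowings-up in the same way as $\phi_{k-1,1}'\circ\sigma_k'$. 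Your route is valid and more self-contained, but it costs an extra computation (extracting $d$ and $\sum m_i$ from $\TW{6}^{\ast k+1}$, $[7_k]$, $[4]$ or $[7]$, $\TW{2}$ or $\TW{5}$), while the paper's comes for free once the ``only if'' bookkeeping is done. Finally, note that the ``delicate bookkeeping'' you defer in Step 3 is where essentially all of the paper's effort is spent --- in particular the identification $[\TW{6},1]=[\sigma_1(a_1),\ldots,\sigma_1(a_6),\sigma_1(b_k)]$ via Corollary~\ref{cor:bu}, and the use of Corollary~\ref{cor:bu2} to see that $\varphi_1'(a_6)$ becomes a $(-2)$-curve and $\varphi_1'(b_k)$ a $(-1)$-curve (whence the image of $b_k$ is a nodal cubic playing the role of $N$) --- so while the architecture of your argument matches the paper's, the substantive verification still remains to be carried out.
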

\begin{proof}
The assertions for $C_4$ and $C_4^{\ast}$
follow from their definition.
We prove (ii) and (iii) for $C_{4k+4}$, $k\ge1$.
We can similarly deal with $C_{4k+4}^{\ast}$.
We first show the ``if'' part of (ii) by induction on $k$.
Let $a_i$ and $b_i$ denote the $i$-th curves of
the linear chains $A_1$ and $B_1$, respectively.
For the sake of simplicity,
we sometimes use the same symbols for
the strict transforms them via a rational map which does not contract them.

Write $\sigma$ as $\sigma=\sigma_2\circ\sigma_1$,
where $\sigma_2$ consists of seven blowings-up.
By Corollary~\ref{cor:bu} (ii),
the last six blowings-up of $\sigma_2$ are sprouting
with respect to the preimages of the cusp.
The weighted dual graph of
the preimage of the cusp under $\sigma_2$ is the linear chain $[\TW{6},1]$.
By Corollary~\ref{cor:bu} (iii),
the blowings-up of $\sigma_1$ are done over the point of
intersection of $\TW{6}$ and the ($-1$)-curve.
From these facts, we see
$[\TW{6},1]=[\sigma_1(a_1),\ldots,\sigma_1(a_6),\sigma_1(b_k)]$.
The dual graph of $\sigma_1(E_0+a_1+\cdots+a_6+b_k)$ is a loop.
We have $[1,\TW{6},1]=[\sigma_1(E_0),\sigma_1(a_1),\ldots,\sigma_1(a_6),\sigma_1(b_k)]$.
Let $\varphi_1:V_1\rightarrow V_0$
denote the contraction of $\sigma_1(E_0+a_1+\cdots+a_5)$
and $\varphi_0:V_0\rightarrow\SP^2$ the contraction of
$\varphi_1(\sigma_1(a_6))$.
Put $\varphi=\varphi_0\circ\varphi_1$.

We arrange the order of blowings-down of
$\varphi\circ\sigma_1$ in the following way.
We first perform six blowings-down $\varphi_1':V\rightarrow V'$
in the same way as $\varphi_1$.
It contracts $E_0+a_1+\cdots+a_5$ to a point.
Then we perform blowings-down $\sigma_1':V'\rightarrow V_0'$
in the same way as $\sigma_1$.
It contracts
$\varphi_1'(D-(C'+a_1+\cdots+a_6+b_k))$ to a point.
Finally we perform the blowing-down $\varphi_0':V_0'\rightarrow\SP^2$
which contracts $\sigma_1'(\varphi_1'(a_6))$.
The rational map
$\varphi_0'\circ\sigma_1'\circ\varphi_1'\circ(\varphi\circ\sigma_1)^{-1}$
is a projective transformation
since it does not have exceptional curves.
By Corollary~\ref{cor:bu2},
$\varphi_1'(a_6)$ (resp.~$\varphi_1'(b_k)$)
is a ($-2$)-curve (resp.~($-1$)-curve).
The weighted dual graph of
$D-(a_1+\cdots+a_6+b_k)$ is unchanged by $\varphi_1'$.

We decompose the exceptional curve
$\varphi_1'(D-(C'+a_1+\cdots+a_5+b_k))$
of $\varphi_0'\circ\sigma_1'$
into linear chains $A'_1,B'_1,\ldots,A'_{g'},B'_{g'},\varphi_1'(D_0)$.
If $k=1$,
then we set
$g'=1$,
$A_1'=[\varphi_1'(a_6),\ldots,\varphi_1'(a_{11}),\varphi_1'(A_2)]$
and
$B_1'=\varphi_1'(B_2)$.
We have ${(A_1')}^{\ast}=[\TW{6},4]^{\ast}=[B_1',8]$.
If $k>1$,
then we set
$g'=2$,
$A_1'=[\varphi_1'(a_6),\ldots,\varphi_1'(a_{5k+6})]$,
$B'_1=[\varphi_1'(b_1),\ldots,\varphi_1'(b_{k-1})]$,
$A'_2=\varphi_1'(A_2)$
and
$B'_2=\varphi_1'(B_2)$.
We have
${(A_1')}^{\ast}=[7_{k}]=[B_1',7]$.
It follows from Lemma~\ref{lem:cres2}
that $\hat{C}:=\varphi(\sigma_1(C'))$ is unicuspidal
and that $\varphi_0'\circ\sigma_1'$ is the minimal embedded
resolution of the cusp.
The linear chains $A'_1,B'_1,\ldots,A'_{g'},B'_{g'}$ coincide
with those given for $\hat{C}$ by Lemma~\ref{lem:cres}.
By the induction hypothesis ($k>1$) and the assertion (i) ($k=1$),
$\hat{C}$ can be constructed in the same way as $C_{4k}$.
The curve $\varphi_1(\sigma_1(a_6))$
intersects
$\varphi_1(\sigma_1(b_k))$ only at two points transversally.
This shows that $\varphi(\sigma_1(b_k))$ is a nodal cubic.
The morphism $\varphi$ (resp.~$\sigma_2$) performs blowings-up in the same way
as $\phi$ (resp.~$\phi'$).
Thus $C$ can be constructed in the same way as $C_{4k+4}$.

We next show (iii) and the ``only if'' part of (ii).
The curve $C$ is the strict transform of an Orevkov's curve $C_{4k}$
via $f=\phi'\circ\phi^{-1}$.
To avoid confusion,
we denote by
$N_i$
(resp.~$\phi_i:W_i\rightarrow\SP^2$, $\phi_i':W_i\rightarrow\SP^2$)
the nodal cubic $N$ (resp.~the birational morphism $\phi$, $\phi'$)
which is used to make $C_{4i+4}$ from $C_{4i}$ for $i\le k$.
The curve $C_{4k}$ is the strict transform of an Orevkov's curve $C_{4k-4}$
via $f_{k-1}=\phi_{k-1}'\circ\phi_{k-1}^{-1}$.
Let $\sigma:V\rightarrow\SP^2$ denote the minimal embedded resolution
of the cusp of $C$ and $e_i$ the exceptional curve of the $i$-th blowing-up
of $\sigma$.
We note that the strict transform of $N_{k}$ via $\phi_k$
coincides with $e_7$.
Let $\sigma_k:V_k\rightarrow\SP^2$ denote the minimal embedded resolution
of the cusp of  $C_{4k}$.
From the definition of the Orevkov's curves,
we infer that the centers of blowings-up of
$\phi_{k}':W_k\rightarrow\SP^2$
(resp.~$\phi_{k-1}':W_{k-1}\rightarrow\SP^2$)
are the cusp of $C$ (resp.~$C_{4k}$)
and its strict transforms.
This shows that $\sigma:V\rightarrow\SP^2$
(resp.~$\sigma_k:V_k\rightarrow\SP^2$)
can be written as $\sigma=\phi_k'\circ\sigma'$
(resp.~$\sigma_k=\phi_{k-1}'\circ\sigma_k'$),
where $\sigma'$ (resp.~$\sigma_k'$)
consists of blowings-up.

Let $A'_1,B'_1,\ldots,A'_{g'},B'_{g'},D_0'$ denote the linear chains
given by Lemma~\ref{lem:cres} for $C_{4k}$.
If $k=1$, they satisfy the conditions (a), (b) in (i).
Otherwise they satisfy those in (ii) with $k$ being replaced with $k-1$
by the induction hypothesis.
Let $\phi_{k,0}:W_{k,0}\rightarrow\SP^2$
denote the first blowing-up of $\phi_k$,
which coincides with that of $\phi_{k-1}'$.
Let
$\phi_{k,1}$ (resp.~$\phi_{k-1,1}'$) denote
the composite of the remaining blowings-up of $\phi_k$ (resp.~$\phi_{k-1}'$).
Each blowing-up of $\phi_{k,1}$ is done over $\Gamma_1$,
while that of $\phi_{k-1,1}'\circ\sigma_k'$ is done over $\Gamma_2$.
This means that
as a weighted graph,
the strict transform of
$A'_1+B'_1+\cdots+A'_{g'}+B'_{g'}+D_0'$
on $V$
via $\sigma_k^{-1}\circ\phi_{k}\circ\sigma':V\rightarrow V_k$
is obtained by increasing the weight of the first curve of $A_1'$
by one, which is done by the first blowing-up of $\phi_{k,1}$.
Moreover,
$A_1+B_1+\cdots+A_{g}+B_{g}+D_0$ is obtained
by attaching the weighted dual graph of
the strict transform of $e_1+\cdots+e_7$ on $V$
to $\overline{A'}_1+B'_1+A'_2+B'_2+\cdots+A'_{g'}+B'_{g'}+D_0'$.
The first curve of $A_1'$ is replaced with the strict transform of $e_6$.

The curves
$e_6$, $e_7$ and
the strict transform of $C$ on $W_k$
intersect each other in the same way as
$\phi_{k,1}(e_6)$, $\phi_{k,1}(e_7)$ and
the strict transform of $C_{4k}$ on $W_{k,0}$ do.
Furthermore,
$\sigma'$ performs blowings-up in the same way as $\phi_{k-1,1}'\circ\sigma_k'$.
We have $(C')^2=(C_{4k}')^2=-2$
by the induction hypothesis.
The first blowing-up of $\sigma'$ is done at $e_6\cap e_7$
and 
each of the next five blowings-up of $\sigma'$ is done at
the point of intersection of the strict transform of $e_7$
and the exceptional curve of the previous blowing-up.
Let $\sigma'':W_k'\rightarrow W_k$ denote
the composite of the first six blowings-up of $\sigma'$
and $e_i'$, $N_{k+1}'$
the strict transforms of $e_i$, $N_{k+1}$ on $W_k'$, respectively.
The dual graph of $e_{1}'+\cdots+e_{6}'+e_{8}'+\cdots+e_{13}'+e_7'+N_{k+1}'$
is a loop.
We have
$[e_1',\ldots,e_{6}',e_8',\ldots,e_{13}',e_7',N_{k+1}']=[\TW{5},3,\TW{5},1,7,1]$.

As we saw in the proof of the ``if'' part,
if $k>1$, then
$e_{13}'$ is the image of $b_k'$ and
$e_{6}',e_8',\ldots,e_{12}'$ are those of $a_1',\ldots,a_6'$,
respectively,
where $a_i'$ (resp.~$b_i'$) denotes
the strict transform on $V$ of the $i$-th curve of $A_1'$ (resp.~$B_1'$)
via $\sigma_k^{-1}\circ\phi_{k}\circ\sigma':V\rightarrow V_k$.
The remaining blowings-up of $\sigma'$
are done over $e_{12}'\cap e_{13}'$.
It follows from the definition of the Orevkov's curves
that if $k=1$, then
$e_{6}',e_8',\ldots,e_{13}'$
are the images of $a_1',\ldots,a_7'$, respectively.
The remaining blowings-up of $\sigma'$
are done over a point on $e_{13}'\setminus(e_1'+\cdots+e_{12}')$.
Let $e_i''$ denote the strict transform of $e_i$ on $V$.
If $k=1$,
then
$g=2$,
$A_1=[e_{1}',\ldots,e_{6}',\underline{\overline{A_1'}}]=\TW{6}^{\ast 2}$,
$B_1=[e_7'']=[7]$,
$A_2=[e_{13}'']=[a_7']=[4]$ and $B_2=B_1'$.
If $k>1$,
then
$g=2$,
$A_1=[e_{1}',\ldots,e_{6}',\overline{A_1'}]=\TW{6}^{\ast k+1}$,
$B_1=[B_1',e_7'']=[7_k]$,
$A_2=A_2'$ and $B_2=B_2'$.
The strict transform of $N_{k+1}$ via $\sigma$
satisfies the condition that $E_0$ must satisfy.
\end{proof}

By Proposition~\ref{prop:ocpe} below,
each $C_{4k}$ (resp.~$C_{4k}^{\ast}$) does not depend
on the choice of $N$ and $C_0$ (resp.~$C_0^{\ast}$)
up to the projective equivalence.
The ``only if'' part of Theorem~\ref{thm1} follows from
this fact and Lemma~\ref{lem:oc} (iii).
\begin{proposition}\label{prop:ocpe}
Let $C^{(1)}$ and $C^{(2)}$ be plane curves.
If there exists a positive integer $k$ such that
$C^{(1)}$ and $C^{(2)}$ can be constructed in the same way as $C_{4k}$,
or they can be constructed in the same way as $C_{4k}^{\ast}$,
then $C^{(1)}$ is projectively equivalent to $C^{(2)}$.
\end{proposition}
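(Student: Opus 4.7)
The plan is to prove Proposition~\ref{prop:ocpe} by induction on $k$, using Lemma~\ref{lem:oc} as the main combinatorial tool. At each step we must check that the geometric data underlying Orevkov's construction are rigid, in the sense that any two admissible choices are equivalent under $\Aut(\SP^2)$.

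For the base case $k=1$: by definition $C_4=f(C_0)$ and $C_4^{\ast}=f(C_0^{\ast})$, where $f$ is the Cremona-type transformation built from a nodal cubic $N$ with a distinguished branch $\Gamma_1$ at its node. Thus it suffices to show that any two triples $(N,\Gamma_1,C_0)$---with $C_0$ the tangent at a flex of $N$---are $\Aut(\SP^2)$-equivalent, and similarly for triples containing $C_0^{\ast}$ in place of $C_0$. Transitivity of $\Aut(\SP^2)$ on nodal cubics lets me fix $N^{(1)}=N^{(2)}=N$; the stabilizer $\Aut(\SP^2,N)$ then contains the involution exchanging the two branches at the node, together with translations by the three flexes of $N$ (realized as the cube-root-of-unity elements in the group law on the smooth locus $N_{\mathrm{sm}}$), and this combined action is transitive on pairs (branch, flex tangent). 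The analogous rigidity for $(N,C_0^{\ast})$ is the subject of the appendix, along the lines of \cite{or,at}.

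For the inductive step: assume the proposition holds for $C_{4k}$ and $C_{4k}^{\ast}$ and let $C^{(1)},C^{(2)}$ be of type $C_{4k+4}$. The proof of Lemma~\ref{lem:oc}(ii) gives an explicit sequence of blow-downs of the minimal resolution $V^{(i)}$, guided by the auxiliary $(-1)$-curve $E_0^{(i)}$, that exhibits $\SP^2$ together with a curve $\hat{C}^{(i)}$ of type $C_{4k}$ (resp.~$C_{4k}^{\ast}$) and a nodal cubic $\hat{N}^{(i)}$, in such a way that $C^{(i)}=f^{(i)}(\hat{C}^{(i)})$ with $f^{(i)}$ the Orevkov transformation built from $(\hat{N}^{(i)},\hat{\Gamma}_1^{(i)})$. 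By the inductive hypothesis there is $g\in\Aut(\SP^2)$ with $g(\hat{C}^{(1)})=\hat{C}^{(2)}$; after modifying $g$ within $\Aut(\SP^2,\hat{C}^{(2)})$ so as to also match $(\hat{N}^{(i)},\hat{\Gamma}_1^{(i)})$, the projective equivariance of the Orevkov recipe yields $g(C^{(1)})=C^{(2)}$.

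The principal obstacle is this final alignment of the auxiliary nodal cubics. The key rigidity required is that $\hat{N}^{(i)}$ is determined by $\hat{C}^{(i)}$ up to the stabilizer $\Aut(\SP^2,\hat{C}^{(i)})$: its node is forced to the cusp of $\hat{C}^{(i)}$, its two analytic branches there are tangent to $\hat{C}^{(i)}$ to orders prescribed by the dual graph (which is common to $C^{(1)}$ and $C^{(2)}$), and the choice of distinguished branch is singled out by the sprouting pattern of $\phi$. Making this rigidity precise---essentially by counting the infinitesimal conditions a nodal cubic through the cusp of $\hat{C}$ with prescribed tangent data there must satisfy---is the main technical step.
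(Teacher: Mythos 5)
Your overall strategy (induction on $k$, reducing at each stage to rigidity of the construction data) is reasonable in outline, and your base case is essentially sound: the stabilizer of the nodal cubic $N$ in $\Aut(\SP^2)$ is the order-six group generated by the branch-swapping involutions $\varphi_i$ of the appendix and the rotations of the group law on $N\setminus\{O\}$, and it does act transitively on the six pairs (branch, flex tangent), and likewise on the six pairs (branch, $Q_i$). The genuine gap is in the inductive step, at exactly the point you flag. You need the auxiliary nodal cubic $\hat{N}^{(i)}$ to be determined by $\hat{C}^{(i)}$ up to $\Aut(\SP^2,\hat{C}^{(i)})$; since the stabilizer of a curve of type $C_{4k}$ is at best very small, this amounts to a uniqueness claim for the nodal cubic with node at the cusp and the prescribed tangency data. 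Counting infinitesimal conditions can only show that the family of such cubics is finite, not that it is a single point or a single orbit; the appendix itself is a cautionary example, since the analogous problem for conics meeting $N$ in exactly one point has \emph{three} solutions, and one needs the explicit symmetries $\varphi_i$ to identify them. Without an argument of that kind for the nodal cubics arising at each stage, the step ``modify $g$ so as to match $(\hat{N}^{(i)},\hat{\Gamma}_1^{(i)})$'' does not go through.

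The paper avoids this difficulty by not inducting on $k$ at all. It defines a single birational morphism $\psi^{(i)}:V^{(i)}\rightarrow\SP^2$ --- contract $D_0^{(i)}+B_2^{(i)}$, then the image of $A_1^{(i)}+E_0^{(i)}+B_1^{(i)}$, then the image of the last curve of $A_1^{(i)}$ --- which sends $A_2^{(i)}$ to a nodal cubic and ${C^{(i)}}'$ to a flex tangent of that cubic. After normalizing the cubic to the standard equation of the appendix, the only ambiguities in reconstructing the tower $\psi^{(i)}$ are the choice of flex (handled by transitivity on flexes) and a binary choice among the two intersection points of $N$ with the first exceptional curve (handled by composing with $\varphi_a$); all remaining centers of blowings-up are forced. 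This reduces the whole proposition to one rigidity statement about the configuration (nodal cubic, flex tangent), for which the appendix supplies exactly the needed symmetries. If you wish to salvage your induction, you would have to establish the uniqueness, or the symmetry-transitivity, of the admissible nodal cubics at each stage, which is essentially as much work as the direct argument.
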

\begin{proof}
We only show the assertion for the case in which
there exists $k\ge2$ such that
$C^{(1)}$ and $C^{(2)}$ can be constructed in the same way as $C_{4k}$.
We can similarly deal with the remaining cases.
For each $i$,
let $\sigma^{(i)}:V^{(i)}\rightarrow\SP^2$
denote the minimal embedded resolution
of the cusp of $C^{(i)}$.
Write $A_1,B_1,\ldots,A_g,B_g$, $D_0$, etc.~given by Lemma~\ref{lem:cres}
for $C^{(i)}$ as
$A_1^{(i)},B_1^{(i)},\ldots,A_{g_i}^{(i)},B_{g_i}^{(i)}$, $D_0^{(i)}$, etc.
Let $E_0^{(i)}$ denote the ($-1$)-curve $E_0$
given for $C^{(i)}$ in Lemma~\ref{lem:oc} (ii).
We define a birational morphism
$\psi^{(i)}:V^{(i)}\rightarrow\SP^2$
in the following way.
It first contracts
$D_0^{(i)}+B_2^{(i)}$
to a point.
Then it contracts the image of
$A_1^{(i)}+E_0^{(i)}+B_1^{(i)}$ to a point.
The last blowing-down of $\psi^{(i)}$
contracts the image $a_1^{(i)}$ of the last curve of $A_1^{(i)}$ to a point.
We infer that $a_1^{(i)}$ intersects
the image of $A_2^{(i)}$ at two points transversally.
It follows that
$\psi^{(i)}(A_2^{(i)})$ is a nodal cubic and that
$\psi^{(i)}({C^{(i)}}')$ is the tangent line at a flex of
$\psi^{(i)}(A_2^{(i)})$.
We may assume that each nodal cubic
$\psi^{(i)}(A_2^{(i)})$ is defined by the equation given in the appendix.
We denote $\psi^{(i)}(A_2^{(i)})$ by $N$.
Let $O_1$, $O_2$ and $O_3$
be the flexes of $N$ defined in the appendix.
There exists a positive integer $a\le3$ such that
$\psi^{(1)}({C^{(1)}}')$ is the tangent line at $O_a$.
Furthermore,
there exists a projective transformation $h$ such that
$h(N)=N$ and $h(\psi^{(1)}({C^{(1)}}'))=\psi^{(2)}({C^{(2)}}')$.

Let $\psi_j^{(i)}:V_j^{(i)}\rightarrow V_{j-1}^{(i)}$
denote the $j$-th blowing-up
of $\psi^{(i)}$, where $V_0^{(i)}=\SP^2$.
Since $h$ maps
the center of $\psi_1^{(1)}$ to that of $\psi_1^{(2)}$,
the rational map
$h_1=\psi_1^{(2)-1}\circ h\circ\psi_1^{(1)}:V_1^{(1)}\rightarrow V_1^{(2)}$
is an isomorphism.
The center of $\psi_2^{(1)}$
is one of the two points of intersection of
$N$ and the exceptional curve of $\psi_1^{(1)}$.
By replacing $h$ with the composite of $h$
and the projective transformation $\varphi_a$ given in the appendix,
if necessary,
we may assume that $h_1$ maps
the center of $\psi_2^{(1)}$ to that of $\psi_2^{(2)}$.
Thus $\psi_2^{(2)-1}\circ h_1\circ\psi_2^{(1)}:V_2^{(1)}\rightarrow V_2^{(2)}$
is an isomorphism.
For the remaining blowings-up,
there are no ambiguities in choices of centers.
It follows that
$h'=\psi^{(2)-1}\circ h\circ\psi^{(1)}:V^{(1)}\rightarrow V^{(2)}$
is an isomorphism.
Since
$h'$ maps the exceptional curve of $\sigma^{(1)}$
to that of $\sigma^{(2)}$,
the rational map $\sigma^{(2)}\circ h'\circ\sigma^{(1)-1}$
is a projective transformation such that
$\sigma^{(2)}\circ h'\circ\sigma^{(1)-1}(C^{(1)})=C^{(2)}$.
\end{proof}
\section{Structure of $\SC^{\ast\ast}$-fibration}
Let $C$ be a rational unicuspidal plane curve
and $P$ the cusp of $C$.
As in Section~\ref{sec:cres},
let $\sigma:V\rightarrow\SP^2$ denote the minimal embedded resolution
of the cusp,
$\sigma_0$ the first blowing-up of $\sigma$
and $C'$ the strict transform of $C$ via $\sigma$.
Put $D=\sigma^{-1}(C)$.
Let $D_0$ denote the exceptional curve of the last blowing-up of $\sigma$.
We decompose the dual graph of $\sigma^{-1}(P)$ into linear chains
$A_1,B_1,\ldots,A_{g},B_{g},D_0$
in the same way as in Section~\ref{sec:cres}.
By Lemma~\ref{lem:cres},
there exists a decomposition
$\sigma=\sigma_0\circ\rho_1'\circ\rho_1''\circ\cdots\circ\rho_g'\circ\rho_g''$,
where each $\rho_i'$ (resp.~$\rho_i''$) consists of
sprouting (resp.~subdivisional) blowings-up
with respect to preimages of $P$.
Let $\NS_i$ denote the number of the blowings-up in $\rho_i'$.

Assume that
the rational unicuspidal plane curve $C$ satisfies the conditions that
$(C')^2=-2$ and $\KB(\SP^2\setminus C)=2$.
We see that one and only one of the two irreducible components of $D-D_0-C'$
meeting with $D_0$ must be a ($-2$)-curve.
Let $F_0'$ denote the ($-2$)-curve and $S_2$ the remaining one.
Let $\varphi_0:V\rightarrow V'$ be the contraction of $D_0$ and $C'$.
Since $(F_0')^2=0$ on $V'$,
there exists a $\SP^1$-fibration $p':V'\rightarrow\SP^1$
such that $F_0'$ is a nonsingular fiber.
Put $p=p'\circ\varphi_0:V\rightarrow\SP^1$.
Since $\KB(\SP^2\setminus C)=2$,
there exists an irreducible component $S_1$ of $D-D_0-F_0'$ meeting with $F_0'$ on $V$.
Put $F_0=F_0'+D_0+C'$.
The surface $X=V\setminus D$ is a $\SQ$-homology plane.
A general fiber of $p|_{X}$ is a curve
$\SC^{\ast\ast}=\SP^1\setminus\{3\text{ points}\}$.
Such fibrations have already been classified in \cite{misu}.
We will use their result to prove our theorem.

There exists a birational morphism $\varphi:V\rightarrow\Sigma_n$
from $V$ onto the Hirzebruch surface $\Sigma_n$ of degree $n$ for some $n$
such that $p\circ\varphi^{-1}:\Sigma_n\rightarrow\SP^1$ is a $\SP^1$-bundle.
The morphism $\varphi$ is the composite of the successive contractions
of the ($-1$)-curves in the singular fibers of $p$.
The curve $S_1$ (resp.~$S_2$) is a 1-section (resp.~2-section) of $p$.
The divisor $D$ contains no other sections of $p$.
\begin{lemma}\label{lem:s}
We may assume that $\varphi(S_1+S_2)$ is smooth.
We have $\varphi(S_1)^2=-1$ and $\varphi(S_2)^2=4$.
\end{lemma}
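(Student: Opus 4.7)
The plan is to exploit the $\mathbb{P}^1$-bundle structure on $\Sigma_n$ together with adjunction and the classification of $\mathbb{C}^{**}$-fibrations from \cite{misu}.

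First I would determine the structure of the fiber $F_0$ containing $F_0'$. The weighted chain $[F_0',D_0,C'] = [2,1,2]$ with $D_0$ the central $(-1)$-curve yields, from the fiber equation $F_0\cdot F_0' = F_0\cdot D_0 = F_0\cdot C' = 0$, the multiplicity assignment $F_0 = F_0' + 2D_0 + C'$. Thus the conditions $S_1\cdot F_0 = S_1\cdot F_0' = 1$ and $S_2\cdot F_0 = 2(S_2\cdot D_0) = 2$ are forced, explaining why $S_1$ is a 1-section and $S_2$ is a 2-section, and why $\varphi_0$ (contracting $D_0$ then $C'$) is the first portion of $\varphi$.

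Second, I would address the smoothness of $\varphi(S_1+S_2)$. Because $S_1\cdot F = 1$ for every fiber $F$, any $(-1)$-curve $E$ contracted during $\varphi$ satisfies $E\cdot S_1\le 1$, so $\varphi(S_1)$ is automatically smooth. For $\varphi(S_2)$, a singularity could arise only if some contracted $(-1)$-curve $E$ in a singular fiber meets $S_2$ in two distinct points. I would argue that by permuting the order of contractions within that fiber---first contracting other $(-1)$-components to separate the two intersection points on $S_2$---we can always arrange that $E\cdot S_2 \le 1$ at the moment $E$ is contracted. This permutation is legitimate because within a singular fiber one may freely choose which $(-1)$-curve to contract first. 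An analogous argument handles the case in which $\varphi(S_1)$ and $\varphi(S_2)$ would otherwise meet non-transversally.

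Third, granted smoothness, I would compute the self-intersections by adjunction on $\Sigma_n$. Writing $\varphi(S_2)\sim 2H_0 + cF$ where $H_0$ is the negative section ($H_0^2 = -n$, $F^2 = 0$, $H_0\cdot F = 1$), adjunction gives $p_a(\varphi(S_2)) = n + c - 1$, so smooth-rationality forces $c = n+1$ and hence $\varphi(S_2)^2 = -4n + 4(n+1) = 4$. For $\varphi(S_1)^2 = -1$, observe that $\Sigma_n$ contains a $(-1)$-curve only when $n=1$ (the negative section); thus the value $-1$ forces $n=1$ and $\varphi(S_1) = H_0$. To pin down $n=1$, I would invoke the Miyanishi--Sugie classification \cite{misu} of $\mathbb{C}^{**}$-fibrations on $\mathbb{Q}$-homology planes, applied to the boundary $D$ containing exactly the section $S_1$ and the 2-section $S_2$ under the hypothesis $\KB = 2$; this classification pins down the Hirzebruch target as $\Sigma_1$.

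The main obstacle is the identification $n=1$: the two self-intersection computations reduce to it via adjunction, but establishing it requires translating our boundary configuration into the framework of \cite{misu}. The smoothness step is comparatively routine, amounting to a combinatorial reordering within each singular fiber.
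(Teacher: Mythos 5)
Your adjunction computation giving $\varphi(S_2)^2=4$ for a smooth rational $2$-section is correct and actually supplies a detail the paper leaves unwritten (the paper proves only the smoothness assertion and omits the self-intersection computations). The genuine gap is in the smoothness step, which is the one part the paper does prove. You propose to avoid a singular image of $S_2$ by ``permuting the order of contractions within that fiber---first contracting other $(-1)$-components to separate the two intersection points on $S_2$.'' This mechanism cannot work: blowing down a curve is an isomorphism away from that curve and can only \emph{create} new intersections, never separate existing ones. So if a fiber component $E$ already satisfies $E\cdot S_2=2$ on $V$, then $E$ meets the image of $S_2$ in at least two points at whatever moment it is contracted, regardless of the order. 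Worse, merely reordering a fixed set of contractions does not change the composite $\varphi$ or the image $\varphi(S_2)$ at all; what is required is a \emph{different} morphism $\varphi$. You also do not address the other two ways $\varphi(S_1+S_2)$ can fail to be smooth: the two branches of $S_2$ over a fiber being brought together by curves each meeting $S_2$ once, and $\varphi(S_1)$ meeting $\varphi(S_2)$ (smoothness of the divisor $S_1+S_2$ includes disjointness of the two images, which your sketch never establishes and which is needed later).

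The paper's argument is the standard elementary-transformation trick, and it is genuinely different from yours: if $\varphi(S_1+S_2)$ is singular at $P$, one writes $\varphi=\phi_1\circ\varphi'$ with $\phi_1$ the blowing-up at $P$ (possible since $S_1+S_2$ is smooth on $V$, so some center of $\varphi$ lies over $P$), and then replaces $\phi_1$ by the contraction of the strict transform of the fiber through $P$. This changes the target Hirzebruch surface and strictly decreases the singularity at $P$; iterating resolves it. Your reduction of $\varphi(S_1)^2=-1$ to ``the classification in [MiSu] pins down $\Sigma_1$'' is also weaker than necessary: once $\varphi(S_1+S_2)$ is smooth, disjointness gives $\varphi(S_1)\cdot\varphi(S_2)=0$; with $\varphi(S_1)\sim H_0+cF$ and $\varphi(S_2)\sim 2H_0+(n+1)F$ (the latter from your adjunction step) this reads $1-n+2c=0$, while irreducibility of the smooth $2$-section forces $\varphi(S_2)\cdot H_0=1-n\ge0$, i.e.\ $n\le1$; hence $n=1$, $c=0$, $\varphi(S_1)=H_0$ and $\varphi(S_1)^2=-1$. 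So the correct proof of the numerical assertions runs \emph{through} the smoothness-and-disjointness statement whose proof is the missing piece in your proposal.
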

\begin{proof}
We only prove the first assertion.
Suppose $\varphi(S_1+S_2)$ has a singular point $P$.
Let $\phi_1$ be the blowing-up at $P$.
Since $S_1+S_2$ is smooth on $V$,
we can choose the order of the blowings-up of $\varphi$
such that $\varphi=\phi_1\circ\varphi'$.
Let $F'$ be the strict transform via $\phi_1$
of the fiber of $p\circ\varphi^{-1}$ passing through $P$.
Let $\phi_2$ be the contraction of $F'$.
Since $F'$ is an irreducible component of
a singular fiber of $p\circ{\varphi'}^{-1}$,
we can replace $\varphi$ with $\phi_2\circ\varphi'$.
We infer that $P$ can be resolved by repeating the above process.
Hence we may assume that $\varphi(S_1+S_2)$ is smooth.
\end{proof}

Each singular fiber of $p$ intersects $S_2$ in at most two points.
Suppose that there exists
a singular fiber $F_2$ of $p$ meeting with $S_2$ in two points.
Let $E_2$ be the sum of the irreducible components of $F_2$
which are not components of $D$.
Because $D$ contains no loop, $E_2$ is not empty.
Since $\KB(V\setminus D)=2$,
each irreducible component of $E_2$ meets with $D$ in at least two points
by \cite[Main Theorem]{mits2}.
In \cite[Lemma 1.6]{misu},
singular fibers of a $\SC^{\ast\ast}$-fibration
with a 2-section
were classified into several types.
Among them,
only singular fibers
of type  ($\mathrm{I_1}$) and ($\mathrm{III_1}$)
satisfy the conditions that
they meet with the 2-section in two points
and that
each irreducible component of $E_2$ meets with $D$ in at least two points.
From the fact that $D$ contains no loop,
we infer that
$F_2$ is of type ($\mathrm{III_1}$).
The dual graph of $F_2+S_1+S_2$ coincides with one of those in the following figure,
where $\ast$ denotes a ($-1$)-curve and $E_2=E_{21}+E_{22}$.
The divisor $T_{2,i}$ may be empty for each $i$.
\begin{center}
\includegraphics{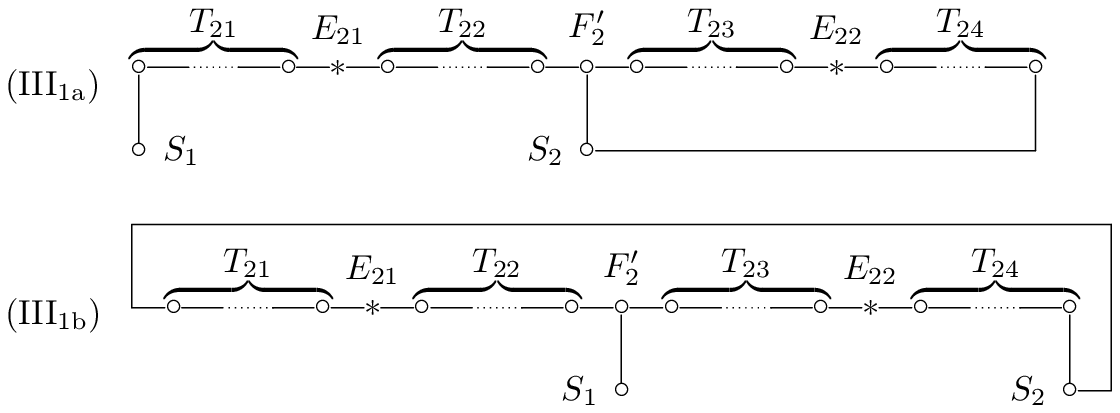}
\end{center}
\begin{lemma}
We have $\varphi(F_2)=\varphi(F_2')$,
where $F_2'$ is the irreducible component of $F_2$
whose position in $F_2$ is illustrated in the above figure.
\end{lemma}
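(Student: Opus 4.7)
The plan is to use that $\varphi:V\to\Sigma_n$ is a composite of successive contractions of $(-1)$-curves lying in singular fibers of $p$, terminating with the $\SP^1$-bundle $p\circ\varphi^{-1}:\Sigma_n\to\SP^1$. Since every fiber of this bundle is reduced and irreducible, $\varphi$ applied to the reducible fiber $F_2$ leaves exactly one component of $F_2$ standing, whose image is $\varphi(F_2)$. The goal is to show that this surviving component is $F_2'$.

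First I would read off from \cite[Lemma 1.6]{misu} the precise self-intersection numbers and multiplicities of the components of a type $(\mathrm{III}_1)$ fiber, together with the position of $F_2'$ in the weighted dual graph; from the figure, $F_2'$ is the distinguished component meeting $S_1$, accounting for $S_1\cdot F_2=1$. Then I would perform the contractions of $\varphi$ explicitly: starting from the endpoints of the chains $T_{2,1}, T_{2,2}$ (when non-empty) and the $\ast$-curves indicated in the figure, one contracts $(-1)$-curves successively, and the numerical data of type $(\mathrm{III}_1)$ ensures that each step again produces a $(-1)$-curve in the complement of $F_2'$. The process terminates with $F_2'$ as a smooth $\SP^1$ of self-intersection $0$, which is the desired fiber of the $\SP^1$-bundle.

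To rule out alternative contraction sequences that would leave a component other than $F_2'$ standing, I would invoke the normalization that $\varphi(S_1+S_2)$ is smooth established in the previous lemma: any such alternative would, by the combinatorics of type $(\mathrm{III}_1)$, force $\varphi(S_1)$ or $\varphi(S_2)$ to pass twice through the image of a contracted node, contradicting smoothness. The main obstacle will be the explicit case analysis according to which of $T_{2,1}, T_{2,2}$ are empty, together with the careful bookkeeping of self-intersections through the contraction sequence. Once $F_2'$ is pinned down as the unique component compatible with smoothness of $\varphi(S_1+S_2)$ and the incidence $S_1\cdot F_2=1$, the remainder is essentially a numerical verification using the type $(\mathrm{III}_1)$ data.
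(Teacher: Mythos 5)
Your overall strategy---pinning down the surviving component of $F_2$ by means of the sections---is the right one, but the mechanism you propose for ruling out the contraction of $F_2'$ is not the one that actually works, and your identification of $F_2'$ is wrong in one of the two cases. First, $F_2'$ is not ``the component meeting $S_1$'' in type $(\mathrm{III_{1a}})$: there $F_2'S_1=0$ and $F_2'S_2=1$, and it is only in type $(\mathrm{III_{1b}})$ that $F_2'$ meets $S_1$. Second, contracting $F_2'$ does not make $\varphi(S_1)$ or $\varphi(S_2)$ pass twice through a point: $F_2'$ meets each of $S_1$, $S_2$ at most once, so the images of the sections remain smooth at the contracted point; what becomes multiple there are the \emph{fiber} branches. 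Hence ``contradicting smoothness of $\varphi(S_1+S_2)$'' is not available as stated, and the appeal to ``the combinatorics of type $(\mathrm{III_1})$'' leaves the key step unproved.

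The obstruction is numerical. Suppose $\varphi$ contracts $F_2'$ and factor $\varphi=\phi_3\circ\phi_2\circ\phi_1$ with $\phi_2$ the contraction of $F_2'$. Since $F_2'$ meets the rest of the fiber in at least two points and, by Lemma~\ref{lem:s}, $\phi_1$ preserves the incidences $\phi_1(F_2')\phi_1(S_2)=1$ (type $(\mathrm{III_{1a}})$) resp.\ $\phi_1(F_2')\phi_1(S_2)=0$ (type $(\mathrm{III_{1b}})$), contracting $F_2'$ forces the image of the fiber to meet the image of the $2$-section $S_2$ with multiplicity at least $3$ in the first case, and to meet the image of the $1$-section $S_1$ with multiplicity at least $2$ in the second. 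Both contradict the fact that a fiber of a $\SP^1$-fibration meets an $n$-section exactly $n$ times. Note also that your part (a), exhibiting a contraction sequence terminating in $F_2'$, is unnecessary: $\varphi$ is already given, and exhibiting one admissible sequence neither proves nor is needed for the claim that $\varphi$ itself does not contract $F_2'$.
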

\begin{proof}
Suppose that $\varphi$ contracts $F_2'$.
Write $\varphi=\phi_3\circ\phi_2\circ\phi_1$,
where $\phi_2$ is the contraction of $F_2'$.
If $F_2$ is of type ($\mathrm{III_{1a}}$),
then $\phi_1(F_2')\phi_1(S_1)=0$
and $\phi_1(F_2')\phi_1(S_2)=1$ by Lemma~\ref{lem:s}.
Since $\phi_1(F_2-F_2')\phi_1(F_2')\ge2$,
we have $\phi_2(\phi_1(F_2))\phi_2(\phi_1(S_2))\ge3$,
which is a contradiction.
If $F_2$ is of type ($\mathrm{III_{1b}}$),
then $\phi_1(F_2')\phi_1(S_2)=0$ by Lemma~\ref{lem:s}.
We have $\phi_2(\phi_1(F_2))\phi_2(\phi_1(S_1))\ge2$,
which is absurd.
\end{proof}

Suppose that there exists
a singular fiber $F_1$ of $p$ which intersects $S_2$ in one point.
Let $E_1$ be the sum of the irreducible components of $F_1$
which are not components of $D$.
By the same reasoning as for $F_2$,
we deduce that
$F_1$ is of type ($\mathrm{IV_2}$).
See \cite[Lemma 1.6]{misu}.
The dual graph of $F_1+S_1+S_2$ coincides with
one of those in the following figure,
where $\bullet$ denotes a ($-2$)-curve.
The divisor $T_{1,i}$ may be empty for each $i$.
\begin{center}
\includegraphics{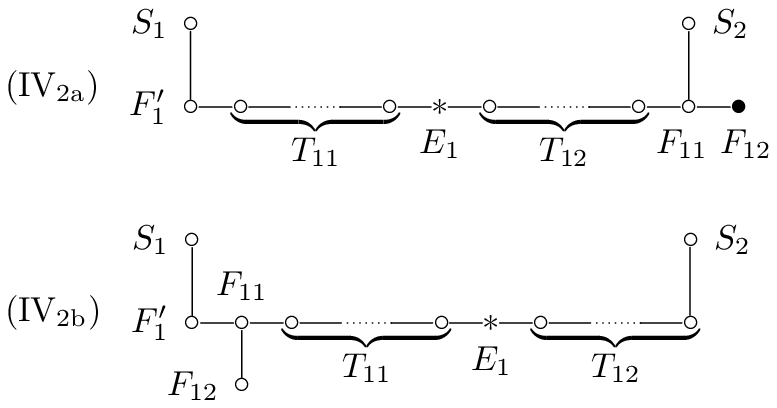}
\end{center}

We can choose the order of the blowings-down of $\varphi$ such that
$\varphi=\varphi'\circ\varphi_1\circ\varphi''$, where
$\varphi_1$ is the composite of all the contractions of irreducible components of $F_1$.
\begin{lemma}\label{lem:f1}
The morphism
$\varphi_1$ contracts
$\varphi''(T_{11}+E_1+T_{12}+F_{11})$ to a ($-1$)-curve,
which is the image of $F_{11}$,
and then contracts the ($-1$)-curve and the image of $F_{12}$
in this order.
We have $\varphi(F_1)=\varphi(F_1')$.
Moreover, $(F_1')^2=F_{12}^2=-2$ if $F_1$ is of type $\mathrm{(IV_{2b})}$.
\end{lemma}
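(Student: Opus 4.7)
The plan is to mirror the argument of the preceding lemma about $F_2'$. First I would show $\varphi(F_1)=\varphi(F_1')$ by contradiction: assume $\varphi$ contracts $F_1'$ and factor $\varphi=\phi_3\circ\phi_2\circ\phi_1$, where $\phi_2$ is the blowing-down of the strict transform of $F_1'$. Since $\varphi(F_1)$ is a smooth fiber of the $\SP^1$-bundle $p\circ\varphi^{-1}$, we have $\varphi(S_1)\cdot\varphi(F_1)=1$, $\varphi(S_2)\cdot\varphi(F_1)=2$, and $\varphi(S_1+S_2)$ is smooth by Lemma~\ref{lem:s}. Reading the position of $F_1'$ from the two dual graphs in the figure preceding the statement, I would compute the local intersection multiplicities of $\phi_1(F_1-F_1')$ with $\phi_1(F_1')$ at the center of $\phi_2$, and show that in subtype $\mathrm{(IV_{2a})}$ the contraction $\phi_2$ forces $\phi_2(\phi_1(F_1))\cdot\phi_2(\phi_1(S_1))\ge 2$, while in subtype $\mathrm{(IV_{2b})}$ it forces $\varphi(S_1+S_2)$ to be singular — either conclusion contradicts the setup.

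Having established $\varphi(F_1)=\varphi(F_1')$, I would determine the contraction sequence comprising $\varphi_1$. The morphism $\varphi_1$ contracts every component of $F_1$ except $F_1'$ to a point of $\Sigma_n$, so it is a sequence of blowings-down of $(-1)$-curves. From the figure, the divisor $T_{11}+E_1+T_{12}+F_{11}$ forms the ``branch'' attached to $F_1'$ through $F_{11}$, while $F_{12}$ meets $F_1'$ on the other side. Standard reasoning on admissible contractions of fibers (each step singles out the unique available $(-1)$-curve) then shows that the curves in $T_{11}+E_1+T_{12}$ must be contracted first in the forced order, after which the image of $F_{11}$ has self-intersection $-1$ and is contracted, leaving the image of $F_{12}$ as the remaining $(-1)$-curve to be contracted last. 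This produces the three-stage description in the statement.

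For the self-intersection assertion in subtype $\mathrm{(IV_{2b})}$, the components $F_1'$ and $F_{12}$ begin with self-intersection $-2$ by \cite[Lemma 1.6]{misu}. Since none of the $(-1)$-contractions inside $T_{11}+E_1+T_{12}+F_{11}$ are centered on $F_1'$ or $F_{12}$ — they touch each of them transversally at most once — the intermediate self-intersections can be tracked by the usual formula, showing $F_1'$ keeps weight $-2$ throughout $\varphi_1$, while $F_{12}$ acquires weight $-1$ precisely at the last step, which confirms $(F_1')^2=F_{12}^2=-2$ on $V'$. The main obstacle I anticipate is the case split between $\mathrm{(IV_{2a})}$ and $\mathrm{(IV_{2b})}$: I must verify, using the explicit shape of $E_1$ from \cite[Lemma 1.6]{misu}, that in each subtype the sequence of available $(-1)$-curves inside $T_{11}+E_1+T_{12}$ is uniquely determined and leaves the neighborhoods of $F_1'$ and $F_{12}$ with exactly the weights claimed.
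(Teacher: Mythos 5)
Your overall plan (exclude the contraction of $F_1'$, then read off the contraction order, then compute the weights) is reasonable in outline, but the last step as you describe it is circular. You propose to take $(F_1')^2=F_{12}^2=-2$ ``by \cite[Lemma 1.6]{misu}'' and then merely track that these weights persist through $\varphi_1$. That equality is precisely the content of the final assertion of the lemma: a priori one only knows $(F_1')^2\le-2$ and $F_{12}^2\le-2$, because $F_1'$ and $F_{12}$ are components of the admissible chains of $D$ (the paper's standing normalization in Section~5 is only $f_1\ge2$), and the Miyanishi--Sugie classification fixes the combinatorial type of the fiber, not these weights. The paper obtains the equalities as output of the order argument: since $(F_1')^2\le-2$ and $F_{12}^2\le-2$, neither can be contracted before $F_{11}$; smoothness of $\varphi(F_1)$ then forces $T_{11}+E_1+T_{12}$ to be contracted to a point before $F_{11}$; at that stage the image of $F_{12}$ must be a $(-1)$-curve and the image of $F_1'$ must end at $0$, which yields $(F_1')^2=F_{12}^2=-2$. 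You need to run this computation as a derivation, not as a confirmation of an assumed input.

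There is also a weaker but real problem in your first step. The inequalities you want, such as $\phi_2(\phi_1(F_1))\cdot\phi_2(\phi_1(S_1))\ge2$ in type $(\mathrm{IV_{2a}})$, require knowing $\phi_1(F_1-F_1')\cdot\phi_1(F_1')\ge2$ at the moment $F_1'$ is contracted; since $F_1'$ is an end of the tree $F_1$ and meets only $F_{11}$, this already presupposes information about the contraction order that you only establish in your second step. The paper sidesteps this: in type $(\mathrm{IV_{2a}})$ it applies Corollary~\ref{cor:bu0} to see that if $F_1'$ were contracted it would be the exceptional curve of the first (sprouting) blowing-up of $\varphi_1$, whose center is not on $\varphi_1(\varphi''(S_2))$ by Lemma~\ref{lem:s}, contradicting the fact that $F_1$ meets $S_2$ in one point; in type $(\mathrm{IV_{2b}})$ it establishes the contraction order first and only then uses Lemma~\ref{lem:s} to exclude the contraction of $F_1'$. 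Finally, your appeal to ``each step singles out the unique available $(-1)$-curve'' is exactly the content of Corollary~\ref{cor:bu0}~(ii) and needs to be cited as such; it is not automatic for an arbitrary fiber (contracting a $(-1)$-curve between two $(-2)$-curves produces two candidates), and it is only the admissibility constraints on the chains that make it hold here.
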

\begin{proof}
Suppose that $F_1$ is of type ($\mathrm{IV_{2b}}$).
Since $(F_1')^2\le -2$, $F_{12}^2\le-2$,
$\varphi$ contracts
$F_{11}$ before the contractions of $F_1'$ and $F_{12}$.
Since $\varphi(F_1)$ is smooth,
$T_{11}+E_1+T_{12}$ must be contracted to a point
before the contraction of $F_{11}$.
It follows that $(F_1')^2=F_{12}^2=-2$.
By Lemma~\ref{lem:s},
$\varphi$ does not contract $F_1'$.

Suppose that $F_1$ is of type ($\mathrm{IV_{2a}}$).
Assume $\varphi$ contracts $F_1'$.
By Corollary~\ref{cor:bu0},
$F_{1}'$ is the exceptional curve of the first blowing-up of $\varphi_1$.
The remaining blowings-up are subdivisional with respect to
the preimages of $\varphi_1(\varphi''(F_1))$.
By Lemma~\ref{lem:s},
the center of the first blowing-up is not on $\varphi_1(\varphi''(S_2))$.
This means that $F_{1}S_2=2$, which is a contradiction.
Thus $\varphi$ does not contract $F_1'$.
By Corollary~\ref{cor:bu0},
$F_{12}$ is the exceptional curve of the first blowing-up of $\varphi_1$.
Since the remaining blowings-up are subdivisional with respect to
the preimages of $\varphi_1(\varphi''(F_1))$,
we infer that
the exceptional curve of the second blowing-up of $\varphi_1$ coincides with
the image of $F_{11}$.
\end{proof}

By the Riemann-Hurwitz formula,
$p$ has no more than two singular fibers which meet with $S_2$ in one point.
By \cite[Lemma 2.3]{misu}, 
$p$ has one singular fiber of type ($\mathrm{III_{1}}$).
It follows
that the dual graph of D must be one of those
in Figure~\ref{fig}.
\begin{figure}
\begin{center}
\includegraphics{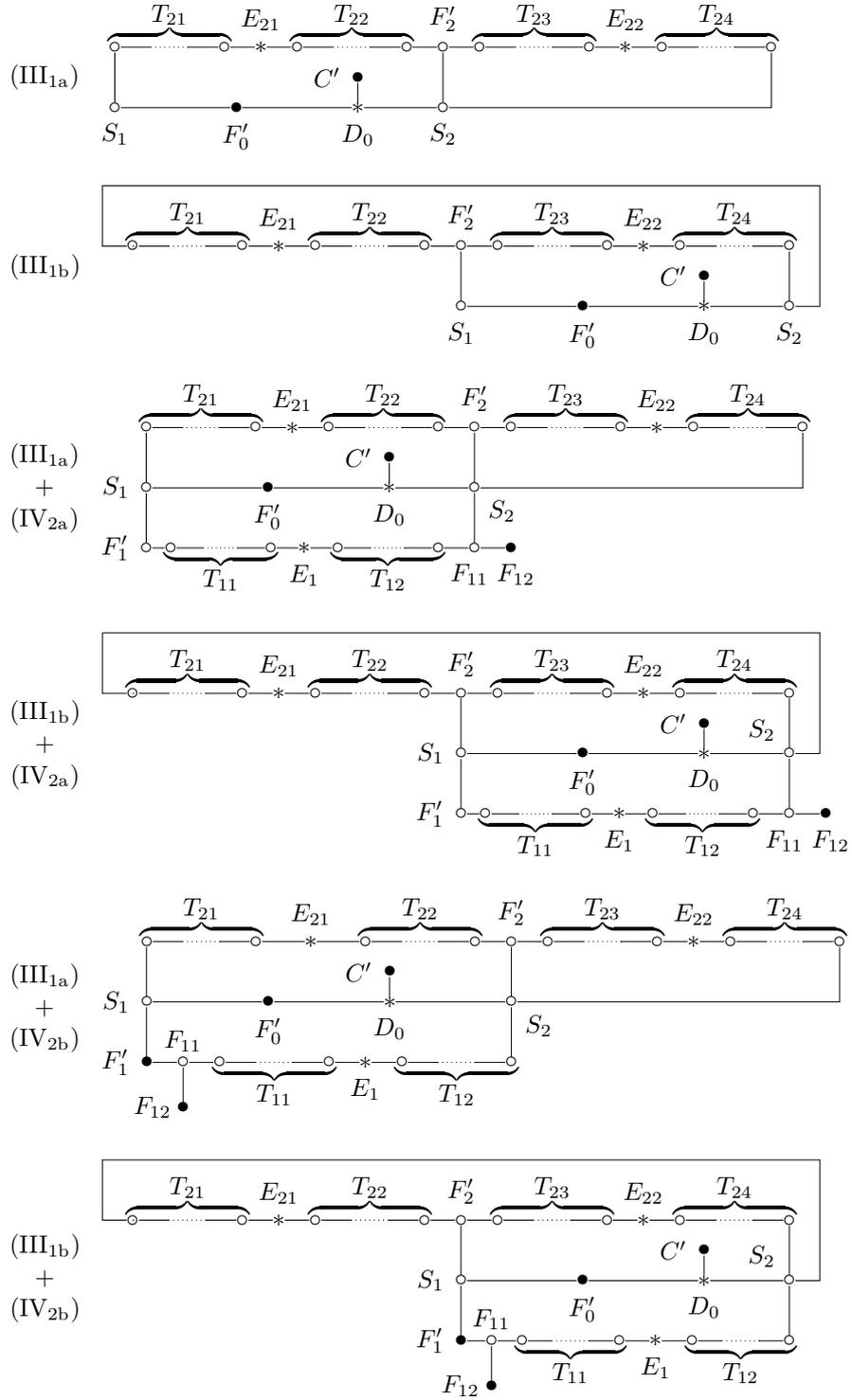}
\caption{Dual graphs of $S_1+S_2+F_0+F_1+F_2$}\label{fig}
\end{center}
\end{figure}
\section{Proof of the ``if'' part of Theorem~1 and Theorem~2}\label{sec:pf}
%
%
%
Let the notation be as in the previous section.
We determine which graphs in Figure~\ref{fig} can be realized.
With the direction from the left-hand side to the right of Figure~\ref{fig},
we regard $T_{ij}$'s as linear chains.
Put $s_i=-S_i^2$ and $f_i=-(F_i')^2$ for each $i$.
We have $s_2\ge 3$, $s_1\ge 2$ and $f_i\ge 2$ for each $i$.

$\mathrm{\bf(III_{1a})}$.
We may assume $\varphi=\varphi_0\circ\varphi_{21}\circ\varphi_{22}$,
where
$\varphi_{22}$
(resp.~$\varphi_{21}$, $\varphi_0$) contracts
$T_{23}+E_{22}+T_{24}$
(resp.~$\varphi_{22}(T_{21}+E_{21}+T_{22})$,
$\varphi_{21}(\varphi_{22}(C'+D_0))$) to a point.
We first show the following lemma.
\begin{lemma}\label{lem:31a}
There exist positive integers $k_{12}$ and $k_{34}$ such that
$[S_1,T_{21}]^{\ast}=[T_{22},k_{12}+1]$ and
$[F_2',T_{23}]^{\ast}=[T_{24},k_{34}+1,\TW{k_{12}-1}]$.
We have
$k_{34}=s_2+2\ge 5$, $T_{23}\ne\emptyset$,
$B_g=[F_0',S_1,T_{21}]$ and $A_g=\TW{\NS_g}\TA[T_{22},k_{12}+2]$.
\end{lemma}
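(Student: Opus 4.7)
The approach combines the structure of the cusp resolution from Lemma~\ref{lem:cres} and Proposition~\ref{prop:cres} with a direct application of Corollary~\ref{cor:bu} to $\varphi_{22}$, plus self-intersection bookkeeping on $S_2$ via Lemma~\ref{lem:s}.

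First I would identify $B_g$ by walking the dual graph of $D$ outward from $D_0$ along the side containing $F_0'$. By Lemma~\ref{lem:cres}, $B_g$ is an admissible linear chain attached to $D_0$. The two $D$-components adjacent to $D_0$ other than $C'$ are $F_0'$ (of weight $2$) and $S_2$ (of weight $s_2\ge 3$). Since the target form $A_g=\TW{\NS_g}\TA[T_{22},k_{12}+2]$ forces $A_g$'s last weight to be $\ge 3$, $F_0'$ cannot sit at the end of $A_g$ and therefore opens $B_g$. In the $\mathrm{III}_{1a}$ configuration, $F_0'$'s only other $D$-adjacency is $S_1$ (the $1$-section meeting $F_0$ at $F_0'$), and $S_1$'s only further $D$-adjacency is the first curve of $T_{21}$ (the $1$-section meeting $F_2$ there). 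The chain $T_{21}$ terminates at its outer end, whose only remaining adjacency is to the non-$D$ curve $E_{21}$. Hence $B_g=[F_0',S_1,T_{21}]$.

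Applying Lemma~\ref{lem:adj}~(i) to the leading entry $F_0'=[2]$ of $B_g$ (via transpose and Lemma~\ref{lem:indf}~(ii)) yields $B_g^{\ast}=[S_1,T_{21}]^{\ast}\TA\TW{1}$. Writing $[S_1,T_{21}]^{\ast}=[T_{22},k_{12}+1]$ for an appropriate chain $T_{22}$ and integer $k_{12}\ge 1$ promotes this to $B_g^{\ast}=[T_{22},k_{12}+2]$, and Proposition~\ref{prop:cres}~(i) ($A_g=\TW{\NS_g}\TA B_g^{\ast}$) then gives the claimed form $A_g=\TW{\NS_g}\TA[T_{22},k_{12}+2]$. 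Reading off the last weight of $A_g$, which equals $-S_2^2=s_2$ (since $A_g$'s last curve is $S_2$), yields $k_{12}+2=s_2$, so $k_{12}=s_2-2\ge 1$.

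For the second adjoint relation I apply Corollary~\ref{cor:bu} to $\varphi_{22}$ with $A=[F_2',T_{23}]$, $B=T_{24}$, middle $(-1)$-curve $E_{22}$, and target $[a]=[k_{12}]$, the persistent image of $F_2'$ (i.e.\ $-\varphi_{22}(F_2')^2=k_{12}$; this matches the bookkeeping above since $\varphi_{21}^{-1}$ must contribute exactly $k_{12}$ further sprouting blowings-up on successive preimages of $F_2'$ to reach $\varphi(F_2')^2=0$). The corollary delivers $[F_2',T_{23}]^{\ast}=[T_{24},n+1,\TW{k_{12}-1}]$; I set $k_{34}:=n$. The identity $k_{34}=s_2+2$ follows from counting sprouting blowings-up on $S_2$: by Lemma~\ref{lem:s}, $\varphi(S_2)^2=4$, while $S_2^2=-s_2$ on $V$, so the total count of sprouting blowings-up on successive strict transforms of $S_2$ by $\varphi^{-1}$ equals $s_2+4$; careful accounting in the $\mathrm{III}_{1a}$ configuration (via Corollary~\ref{cor:bu2} applied to $\varphi_{21}^{-1}$) shows that $\varphi_{21}^{-1}$ and $\varphi_0^{-1}$ together contribute exactly $2$, leaving $k_{34}=s_2+2\ge 5$.

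Finally, $T_{23}\ne\emptyset$ by contradiction: if $T_{23}=\emptyset$, then $[F_2',T_{23}]^{\ast}=[F_2']^{\ast}=\TW{f_2-1}$ consists only of $2$'s, contradicting the right-hand side $[T_{24},k_{34}+1,\TW{k_{12}-1}]$ whose entry $k_{34}+1\ge 6$. The main obstacle I foresee is the sprouting/subdivisional bookkeeping in the Corollary~\ref{cor:bu} application to $\varphi_{22}$ and in the count justifying $k_{34}=s_2+2$; one must verify that $E_{22}$ is indeed a $(-1)$-curve on $V$ (so that the chain $[F_2',T_{23},E_{22},T_{24}]$ fits the $[A,1,B]$ form) and confirm the precise interaction of $\varphi_{21}^{-1}$ and $\varphi_{22}^{-1}$ with the $\varphi_{22}(F_2')\cap\varphi_{22}(S_2)$ intersections in type $\mathrm{III}_{1a}$.
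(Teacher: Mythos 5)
Your overall strategy (Corollary~\ref{cor:bu} and Corollary~\ref{cor:bu2} applied to the contractions of the fiber $F_2$, the normalization $\varphi(S_2)^2=4$ from Lemma~\ref{lem:s}, and Proposition~\ref{prop:cres} to pass from $B_g$ to $A_g$) is the paper's, and your derivations of $[F_2',T_{23}]^{\ast}=[T_{24},k_{34}+1,\TW{k_{12}-1}]$, of $k_{34}=s_2+2\ge5$, and of $T_{23}\ne\emptyset$ are essentially the paper's arguments. But there is a genuine gap at your very first step: you identify $B_g=[F_0',S_1,T_{21}]$ by arguing that the target form $A_g=\TW{\NS_g}\TA[T_{22},k_{12}+2]$ forces the last weight of $A_g$ to be at least $3$, so that $F_0'$ cannot end $A_g$. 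That is circular: the target form of $A_g$ is exactly what is to be proved, and it is deduced \emph{from} $B_g=[F_0',S_1,T_{21}]$. Nor does Proposition~\ref{prop:cres} give the weaker claim for free: a chain $A_g=\TW{\NS_g}\TA B_g^{\ast}$ can perfectly well end in a $(-2)$-curve (e.g.\ $B_g=[3]$ gives $A_g=[\TW{\NS_g-1},3,2]$). The alternative $A_g=\TP{[F_0',S_1,T_{21}]}$, which entails $g=1$, $T_{22}=\emptyset$ and $B_1=[S_2,F_2',T_{23}]$, must be excluded by an actual computation; the paper does this by comparing $\TP{A_1}^{\ast}=[S_1,T_{21}]^{\ast}\TA\TW{1}=[k_{12}+2]$, a single-vertex chain, with $[\NS_1+1,\TP{B_1}]$, which has at least four vertices because $T_{23}\ne\emptyset$. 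This forces the case analysis to come \emph{after} the adjoint relations and $T_{23}\ne\emptyset$ are established --- the reverse of your order.

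Two further points. First, you never derive $[S_1,T_{21}]^{\ast}=[T_{22},k_{12}+1]$; you only ``write'' it, but $T_{22}$ is the geometrically given subchain of the fiber $F_2$, not a chain you may choose. The relation comes from applying Corollary~\ref{cor:bu} to the contraction by $\varphi_{21}$ of $[S_1,T_{21},E_{21},T_{22}]$ onto the $(-1)$-curve $\varphi_{21}(\varphi_{22}(S_1))$ (Lemma~\ref{lem:s}), after noting that $\varphi_{22}$ leaves this chain unchanged; that application is also what identifies $k_{12}$ with the number of sprouting blowings-up of $\varphi_{21}$ and hence justifies your later claim $-\varphi_{22}(F_2')^2=k_{12}$, as well as the positivity of $k_{12}$. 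Second, your assertion $k_{12}=s_2-2$ is false in general: the last entry of $\TW{\NS_g}\TA[T_{22},k_{12}+2]$ equals $k_{12}+2$ only when $T_{22}\ne\emptyset$, and equals $k_{12}+3$ when $T_{22}=\emptyset$ --- which is precisely what happens in the cases that are eventually realized, where $s_2=k_{12}+3$. That identity is not part of the lemma, but carrying it into the subsequent case analysis would produce wrong conclusions.
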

\begin{proof}
By Lemma~\ref{lem:s}, $\varphi_{21}(\varphi_{22}(S_1))$ is a ($-1$)-curve.
The morphism $\varphi_{22}$ does not change
the linear chain $[S_1,T_{21},E_{21},T_{22}]$.
We apply Corollary~\ref{cor:bu}
to $[S_1,T_{21},E_{21},T_{22}]$ and $\varphi_{21}$.
There exists a positive integer $k_{12}$ such that
$[S_1,T_{21}]^{\ast}=[T_{22},k_{12}+1]$.
Since $\varphi_{21}(\varphi_{22}(F_2'))$ is a $0$-curve,
$\varphi_{22}(F_2')$ must be a ($-k_{12}$)-curve
by Corollary~\ref{cor:bu2}.
Again by Corollary~\ref{cor:bu},
there exists a positive integer $k_{34}$ such that
$[F_2',T_{23}]^{\ast}=[T_{24},k_{34}+1,\TW{k_{12}-1}]$.
Since $\varphi(S_2)^2=4$,
we have $4=-s_2+k_{34}+2$
by Corollary~\ref{cor:bu2}.
If $T_{23}=\emptyset$,
then $[T_{24},k_{34}+1,\TW{k_{12}-1}]=\TW{f_2-1}$
by Lemma~\ref{lem:adj}.
We have $k_{34}=1$.
Thus $s_2=-1$, which is absurd.
Hence $T_{23}\ne\emptyset$.
Either
$A_g=\TP{[F_0',S_1,T_{21}]}$ or $B_g=[F_0',S_1,T_{21}]$
by Lemma~\ref{lem:cres} (i).
Suppose the former case holds.
We have $g=1$.
Since $T_{23}\ne\emptyset$,
we see $B_1=[S_2,F_2',T_{23}]$ and $T_{22}=\emptyset$.
By Proposition~\ref{prop:cres} and Lemma~\ref{lem:adj},
$[\NS_1+1,\TP{B_1}]=\TP{A_1}^{\ast}=[F_0',S_1,T_{21}]^{\ast}=[S_1,T_{21}]^{\ast}\TA\TW{1}=[k_{12}+2]$,
which is a contradiction.
Thus $B_g=[F_0',S_1,T_{21}]$.
By Proposition~\ref{prop:cres} and Lemma~\ref{lem:adj},
$A_g=\TW{\NS_g}\TA B_g^{\ast}=\TW{\NS_g}\TA [S_1,T_{21}]^{\ast}\TA\TW{1}=\TW{\NS_g}\TA[T_{22},k_{12}+2]$.
\end{proof}
Case (i): $T_{24}=\emptyset$.
By Lemma~\ref{lem:31a},
$[F_2',T_{23}]=[k_{34}+1,\TW{k_{12}-1}]^{\ast}$.
By Lemma~\ref{lem:adj},
$[k_{34}+1,\TW{k_{12}-1}]^{\ast}=[k_{12}+1,\TW{k_{34}-1}]$.
Thus $f_2=k_{12}+1$ and $T_{23}=\TW{k_{34}-1}$.
Suppose $T_{22}\ne\emptyset$.
We have $g=2$ and $A_2=[F_2',S_2]$ by Lemma~\ref{lem:cres} (i).
By Lemma~\ref{lem:31a},
we obtain $\NS_2=1$, $[f_2-1]=T_{22}$, $s_2=k_{12}+2$ and $k_{34}=k_{12}+4$.
Either $T_{23}=\TP{A}_1$ or $T_{23}=B_1$.
Since $T_{23}$ consists of ($-2$)-curves,
it follows from Proposition~\ref{prop:cres} (iii) that
$T_{23}=B_1$ and $T_{22}=A_1$.
By Proposition~\ref{prop:cres},
$T_{22}=A_1=\TW{\NS_1}\TA B_1^{\ast}=\TW{\NS_1}\TA T_{23}^{\ast}=\TW{\NS_1}\TA [k_{12}+4]$.
Thus $\NS_1=1$ and $f_2=k_{12}+6$, which contradicts $f_2=k_{12}+1$.
Hence $T_{22}=\emptyset$.
We have $g=1$.
By Lemma~\ref{lem:31a},
$[S_1,T_{21}]=\TW{k_{12}}$.
This means that
$A_1=\TP{[S_2,F_2',T_{23}]}$ and $B_1=[S_1,T_{21}]$.
By Lemma~\ref{lem:31a},
$[\TW{k_{34}-1},f_2,s_2]=\TW{\NS_1}\TA[k_{12}+2]$.
We see
$s_2=k_{12}+3$, $f_2=2$ and $\NS_1=k_{34}+1$.
It follows that $k_{12}=1$, $s_2=4$, $k_{34}=6$ and $\NS_1=7$.
We have $A_1=[\TW{6},4]$ and $[B_1,\NS_1+1]=A_1^{\ast}=[\TW{2},8]$.
The curve $E_{22}$ intersects only the first and the last curve of $A_1$
among the irreducible components of $D$.
By Lemma~\ref{lem:oc}, $C$ can be constructed as $C_4$.

Case (ii): $T_{24}\ne\emptyset$.
Since $S_2$ is a branching component of $D$,
we infer $A_{g}=S_2$ by Lemma~\ref{lem:cres} (i).
By Lemma~\ref{lem:31a},
we obtain $\NS_g=1$, $T_{22}=\emptyset$, $s_2=k_{12}+3$ and $k_{34}=k_{12}+5$.
We have $g=2$.
Either $B_1=[F_2',T_{23}]$ or $B_1=\TP{T}_{24}$.
If $B_1=[F_2',T_{23}]$,
then $T_{24}=A_1=\TW{\NS_1}\TA[F_2',T_{23}]^{\ast}=\TW{\NS_1}\TA[T_{24},k_{34}+1,\TW{k_{12}-1}]$, which is impossible.
Thus
$B_1=\TP{T}_{24}$ and $A_1=\TP{[F_2',T_{23}]}$.
By Proposition~\ref{prop:cres},
$[\NS_1+1,T_{24}]=\TP{A}_1^{\ast}=[F_2',T_{23}]^{\ast}$.
By Lemma~\ref{lem:31a},
$[\NS_1+1,T_{24}]=[T_{24},k_{12}+6,\TW{k_{12}-1}]$.
Hence $k_{12}=1$, $[\NS_1+1,T_{24}]=[T_{24},7]$.
It follows from Lemma~\ref{lem:adj} that
$\NS_1=6$ and $T_{24}=[7_k]$, where $k=\NV(T_{24})\ge1$.
We have $B_1=[7_k]$, $A_1=\TW{\NS_1}\TA B_1^{\ast}=\TW{6}^{\ast k+1}$
and $A_2=[4]$.
Since $[B_2,\NS_2+1]=A_2^{\ast}=\TW{3}$, we obtain $B_2=\TW{2}$.
The curve $E_{22}$ intersects only the first curve of $A_1$ and
the last curve of $B_1$
among the irreducible components of $D$.
By Lemma~\ref{lem:oc}, $C$ can be constructed as $C_{4k+4}$.

$\mathrm{\bf(III_{1a})+(IV_{2a})}$.
We have $A_g=S_2$ and $B_g=[F_0',S_1,F_1',T_{11}]$
because $S_2$ is a branching component of $D$.
By Proposition~\ref{prop:cres},
$[B_g,\NS_g+1]=A_g^{\ast}=\TW{s_2-1}$.
Thus $[F_1',T_{11}]=\TW{s_2-4}$.
By Lemma~\ref{lem:f1},
$\varphi$ contracts $F_1$ to a $0$-curve,
which is the image of $F_1'$.
By Lemma~\ref{lem:indf} (iii),
$[T_{12},F_{11},F_{12}]=[F_1',T_{11}]^{\ast}=\TW{s_2-4}^{\ast}=[s_2-3]$,
which is absurd.
Hence this case does not occur.

$\mathrm{\bf(III_{1a})+(IV_{2b})}$.
We may assume
$\varphi=\varphi_0\circ\varphi_1\circ\varphi_{21}\circ\varphi_{22}$,
where
$\varphi_{22}$
(resp.~$\varphi_{21}$, $\varphi_1$, $\varphi_0$) contracts
$T_{23}+E_{22}+T_{24}$
(resp.~$\varphi_{22}(T_{21}+E_{21}+T_{22})$,
$\varphi_{21}(\varphi_{22}(F_{11}+F_{12}+T_{11}+E_{11}+T_{12}))$,
$\varphi_{1}(\varphi_{21}(\varphi_{22}(C'+D_0)))$) to a point.
We show the following three lemmas.
\begin{lemma}\label{lem:31a42b1}
There exist positive integers $k_{12}$ and $k_{34}$ such that
$[S_1,T_{21}]^{\ast}=[T_{22},k_{12}+1]$ and
$[F_2',T_{23}]^{\ast}=[T_{24},k_{34}+1,\TW{k_{12}-1}]$.
We have
$[F_{11},T_{11}]^{\ast}=[T_{12},s_2-k_{34}+1]$
and $s_2\ge k_{34}+1$.
\end{lemma}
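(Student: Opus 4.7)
The plan is to imitate the proof of Lemma~\ref{lem:31a}, applying Corollary~\ref{cor:bu} and Corollary~\ref{cor:bu2} to each stage of the decomposition $\varphi=\varphi_0\circ\varphi_1\circ\varphi_{21}\circ\varphi_{22}$, while additionally handling the new fiber $F_1$ through the description of $\varphi_1$ given in Lemma~\ref{lem:f1}.

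First I would obtain the two equalities $[S_1, T_{21}]^\ast = [T_{22}, k_{12}+1]$ and $[F_2', T_{23}]^\ast = [T_{24}, k_{34}+1, \TW{k_{12}-1}]$ by repeating verbatim the argument of Lemma~\ref{lem:31a}: since $\varphi_{22}$ and $\varphi_{21}$ affect only components of $F_2$ and do not touch the components of $F_1$, the linear chain $[S_1, T_{21}, E_{21}, T_{22}]$ is preserved by $\varphi_{22}$, so Corollary~\ref{cor:bu} applied to the contraction of this chain by $\varphi_{21}$ produces the positive integer $k_{12}$, and Corollary~\ref{cor:bu} applied to $\varphi_{22}$ acting on $[F_2', T_{23}, E_{22}, T_{24}]$ produces $k_{34}$.

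For the third equality, I would first use Corollary~\ref{cor:bu2} on $[F_2', T_{23}, E_{22}, T_{24}, S_2]$ under $\varphi_{22}$ (with $b=s_2$) to conclude that $\varphi_{22}(S_2)$ is a $(-(s_2 - k_{34}))$-curve, a self-intersection preserved by $\varphi_{21}$. Then by Lemma~\ref{lem:f1}, the first substage of $\varphi_1$ contracts $T_{11} + E_{11} + T_{12}$ and turns the image of $F_{11}$ into a $(-1)$-curve; applying Corollary~\ref{cor:bu} with $A = [F_{11}, T_{11}]$, $B = T_{12}$, $a = 1$ yields $[F_{11}, T_{11}]^\ast = [T_{12}, n+1]$ for some positive integer $n$. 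Corollary~\ref{cor:bu2} applied to the extended chain $[F_{11}, T_{11}, E_{11}, T_{12}, S_2]$ then shows that after this substage $S_2$ has self-intersection $-(s_2 - k_{34} - n)$. The remaining substages of $\varphi_1$ contract $F_{11}$ and then $F_{12}$, each of which I expect to contribute $+1$ to the self-intersection of $S_2$, while $\varphi_0$ contracts $D_0$ and then $C'$, again contributing $+1$ apiece. Equating the resulting final value with $\varphi(S_2)^2 = 4$ given by Lemma~\ref{lem:s} would force $n = s_2 - k_{34}$, from which the bound $s_2 \ge k_{34} + 1$ follows via $n \ge 1$.

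The hard part is verifying that each of the four contractions of $F_{11}, F_{12}, D_0, C'$ increases the self-intersection of $S_2$ by exactly one, i.e.\ that at the moment each is contracted its image passes through $S_2$. This requires tracking the adjacency changes through the intermediate contractions — in particular the point that arises when $T_{11}+E_{11}+T_{12}$ collapses, which places the image of $S_2$ on the $(-1)$-curve $F_{11}$ and, after $F_{11}$ is contracted, on $F_{12}$ as well — together with the known fiber structure $F_0 = F_0' + 2 D_0 + C'$ that puts $S_2$ through both $D_0$ and $C'$ at the relevant moments.
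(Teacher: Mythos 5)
Your argument is correct and takes essentially the same route as the paper: the first two identities are obtained by repeating Lemma~\ref{lem:31a}, the third by applying Corollary~\ref{cor:bu} to the first stage of $\varphi_1$ as described in Lemma~\ref{lem:f1}, and the inequality from the bookkeeping $4=\varphi(S_2)^2=-s_2+k_{34}+2+l+2$ with $l\ge1$. The explicit tracking of the four $+1$ contributions from contracting $F_{11}$, $F_{12}$, $D_0$, $C'$ that you flag as the hard part is precisely what the paper compresses into ``we infer''.
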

\begin{proof}
By the same arguments as in the proof of Lemma~\ref{lem:31a},
there exist positive integers $k_{12}$, $k_{34}$ such that
$[S_1,T_{21}]^{\ast}=[T_{22},k_{12}+1]$ and
$[F_2',T_{23}]^{\ast}=[T_{24},k_{34}+1,\TW{k_{12}-1}]$.
By Lemma~\ref{lem:f1} and Corollary~\ref{cor:bu},
there exists a positive integer $l$ such that
$[F_{11},T_{11}]^{\ast}=[T_{12},l+1]$.
Since $\varphi(S_2)^2=4$,
we infer $4=-s_2+k_{34}+2+l+2$.
Thus $1\le l=s_2-k_{34}$.
\end{proof}
\begin{lemma}\label{lem:31a42b2}
We have $T_{21}=\emptyset$,
$T_{22}=\TW{s_1-2}$ and
$k_{12}=1$.
\end{lemma}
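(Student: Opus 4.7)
The plan is to pin down the placement of $F_0'$ and $S_1$ in the cusp resolution tree, derive $T_{21}=\emptyset$ from a branching argument, and then read off $T_{22}$ and $k_{12}$ from the adjoint identity in Lemma~\ref{lem:31a42b1}.

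First, I would show that $F_0'$ is the first component of $B_g$. Since $F_0'$ meets $D_0=E_{g,0}$, the cusp-resolution tree in Lemma~\ref{lem:cres}(i) forces $F_0'$ to be either the last component of $A_g$ or the first component of $B_g$. In the former case the neighbors of $F_0'$ inside $\sigma^{-1}(P)$ would be its predecessor in $A_g$ (or $E_{g-1,0}$ if $A_g$ has only one component) together with $D_0$ and $S_1$, giving three distinct neighbors. This would make $F_0'$ a branching component of $\sigma^{-1}(P)$; but Lemma~\ref{lem:cres}(i) says the only branching components are the vertices $E_{i,0}$, which $F_0'$ is not. Hence $F_0'$ is the first component of $B_g$, and by exactly the same reasoning applied to $F_0'$ itself, $S_1$ must be the second component of $B_g$. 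In particular $S_1$ lies strictly inside the linear chain $B_g$ and is not a branching vertex $E_{i,0}$ of the cusp resolution.

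Now for the heart of the argument, suppose towards contradiction that $T_{21}\ne\emptyset$. The chain $[S_1,T_{21},E_{21},T_{22}]$ from the proof of Lemma~\ref{lem:31a42b1} shows that $S_1$ meets the first component of $T_{21}$, which lies in $D$. From the $(\mathrm{IV_{2b}})$ dual graph used to derive Lemma~\ref{lem:31a42b1}, $S_1$ also meets a component of $F_1$ lying in $D$. Together with its neighbor $F_0'$, this gives $S_1$ three distinct neighbors inside $\sigma^{-1}(P)$, so $S_1$ would have to be one of the branching vertices $E_{i,0}$ by Lemma~\ref{lem:cres}(i), contradicting the previous paragraph. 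Hence $T_{21}=\emptyset$.

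Once $T_{21}=\emptyset$, Lemma~\ref{lem:31a42b1} specialises to $[S_1]^{\ast}=[T_{22},k_{12}+1]$. By Lemma~\ref{lem:adj}(ii) we have $[S_1]^{\ast}=\TW{s_1-1}$, so $\TW{s_1-1}=[T_{22},k_{12}+1]$. Since every entry of $\TW{s_1-1}$ equals $2$, this forces $k_{12}+1=2$ and $T_{22}=\TW{s_1-2}$. The main obstacle is the placement step: the extra adjacencies introduced by the $(\mathrm{IV_{2b}})$ fiber make it delicate to cleanly rule out $F_0'$ or $S_1$ being a branching vertex of $\sigma^{-1}(P)$, but once this is done the contradiction and the adjoint computation are essentially immediate.
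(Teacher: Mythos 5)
Your reduction of the lemma to the single claim ``$S_1$ is not a branching vertex of the resolution tree'' is the right target, and your final step (from $T_{21}=\emptyset$ and $[S_1,T_{21}]^{\ast}=[T_{22},k_{12}+1]$ to $\TW{s_1-1}=[T_{22},k_{12}+1]$, hence $k_{12}=1$ and $T_{22}=\TW{s_1-2}$) is exactly the paper's closing computation. The gap is in the placement step, which you yourself flag as the main obstacle but do not actually close. Your argument that $F_0'$ cannot be the last component of $A_g$ assumes that the predecessor of $F_0'$ in $A_g$, $D_0$ and $S_1$ are \emph{three distinct} neighbours; but nothing prevents the predecessor of $F_0'$ in $A_g$ from being $S_1$ itself. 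In that configuration $F_0'$ has only the two neighbours $S_1$ and $D_0$ inside $D$, so no branching contradiction arises. In particular the configuration $A_g=[S_1,F_0']$ with $S_1=E_{g-1,0}$ a genuine branching vertex (its three neighbours being $F_0'$, $F_1'$ and the first curve of $T_{21}$, distributed as the ends of $A_{g-1}$ and $B_{g-1}$) is perfectly consistent with the combinatorics of Lemma~\ref{lem:cres}(i), so it cannot be excluded by counting neighbours.

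This surviving configuration is precisely what the paper's proof spends almost all of its length eliminating, and it does so with weights, not with graph shape: assuming $S_1$ is branching it derives $A_g=[S_1,F_0']$, computes $[B_g,\NS_g+1]=A_g^{\ast}=[3,\TW{s_1-2}]$ via Proposition~\ref{prop:cres} and Lemma~\ref{lem:adj}, extracts $\NS_g=1$, $f_2=2$, $s_2=3$, $k_{12}=1$ and $k_{34}\le 2$, and then kills the two sub-cases $k_{34}=1$ and $k_{34}=2$ by further adjoint identities (the first producing the impossible relation $S_1=\TW{1}\TA[\TP{T}_{21},S_1]\TA\TW{1}$, the second forcing $[S_1,T_{21}]=[2]$ against $T_{21}\ne\emptyset$). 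Only after this does $T_{21}=\emptyset$ follow. So your proposal is missing the essential numerical argument; the purely combinatorial shortcut does not suffice.
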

\begin{proof}
Suppose that $S_1$ is a branching component of $D$.
We have $A_g=[S_1,F_0']$,
$T_{12}=T_{24}=\emptyset$ and $B_g=[S_2,F_2',\ldots]$.
By Lemma~\ref{lem:31a42b1},
$[F_{11},T_{11}]=\TW{s_2-k_{34}}$ and
$[F_2',T_{23}]=[k_{12}+1,\TW{k_{34}-1}]$.
By Proposition~\ref{prop:cres},
$[B_g,\NS_g+1]=A_g^{\ast}=\TW{1}\TA\TW{s_1-1}=[3,\TW{s_1-2}]$.
Thus $\NS_g=1$, $f_2=2$ and $s_2=3$.
Since $f_2=k_{12}+1$,
we obtain $k_{12}=1$.
Because $\emptyset\ne [F_{11},T_{11}]=\TW{3-k_{34}}$,
we have $k_{34}\le 2$.
If $k_{34}=1$, then $T_{23}=\TW{k_{34}-1}=\emptyset$.
Thus $B_g=[S_2,F_2',\TP{T}_{22}]$.
By Proposition~\ref{prop:cres},
$A_g=\TW{\NS_g}\TA B_g^{\ast}=\TW{1}\TA[3,2,\TP{T}_{22}]^{\ast}=\TW{1}\TA[2,\TP{T}_{22}]^{\ast}\TA\TW{2}$.
By Lemma~\ref{lem:31a42b1},
$\TW{1}\TA[2,\TP{T}_{22}]^{\ast}\TA\TW{2}=\TW{1}\TA[\TP{T}_{21},S_1]\TA\TW{2}$.
This means that $S_1=\TW{1}\TA[\TP{T}_{21},S_1]\TA\TW{1}$, which is impossible.
Hence $k_{34}=2$.
Since $T_{23}=[2]\ne\emptyset$,
we infer $B_g=[S_2,F_2',T_{23}]$ and $T_{22}=\emptyset$.
By Lemma~\ref{lem:31a42b1}, $[S_1,T_{21}]=\TW{k_{12}}=[2]$, which is absurd.
Hence $S_1$ is not a branching component of $D$.
We have $T_{21}=\emptyset$.
By Lemma~\ref{lem:31a42b1}, $[T_{22},k_{12}+1]=\TW{s_1-1}$.
From this, we obtain $k_{12}=1$ and $T_{22}=\TW{s_1-2}$.
\end{proof}
\begin{lemma}\label{lem:31a42b3}
We have $T_{11}=T_{12}=\emptyset$,
$B_g=[F_0',S_1,F_1',F_{11},F_{12}]$,
$s_2=k_{34}+1$
and
$F_{11}=[2]$.
\end{lemma}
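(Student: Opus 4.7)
The plan is to pin down the shape of $B_g$ using the dual graph of $D$ in case $(\mathrm{III_{1a}})+(\mathrm{IV_{2b}})$ (shown in Figure~\ref{fig}) together with the non-branching of $S_1$, and then to extract the remaining equalities from the identity in Lemma~\ref{lem:31a42b1}.

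By Lemma~\ref{lem:31a42b2}, $S_1$ has degree two in the dual graph of $D$ with one neighbour $F_0'$; as a section of $p$, it meets $F_1$ in exactly one point. The shape of a $(\mathrm{IV_{2b}})$-fiber (see the figure following Lemma~\ref{lem:f1}) shows that this point lies on $F_1'$. Reading $B_g$ from $D_0$ outwards, it therefore begins with the segment $[F_0',S_1,F_1']$. Continuing along $B_g$ from $F_1'$, the only option is to proceed into $F_{11}$: the opposite neighbour of $F_1'$ inside $F_1$ is $E_1\not\subset D$, and any non-empty $T_{11}$ between $F_1'$ and $F_{11}$ would make $F_{11}$ a vertex of degree $\ge 3$ in $D$, violating the linearity of $B_g$. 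An identical argument at the other end yields $T_{12}=\emptyset$ and identifies $F_{12}$ as the final term of $B_g$. Thus $B_g=[F_0',S_1,F_1',F_{11},F_{12}]$ and $T_{11}=T_{12}=\emptyset$.

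Substituting $T_{11}=T_{12}=\emptyset$ into the identity $[F_{11},T_{11}]^{\ast}=[T_{12},s_2-k_{34}+1]$ of Lemma~\ref{lem:31a42b1} reduces it to $[F_{11}]^{\ast}=[s_2-k_{34}+1]$. By Lemma~\ref{lem:adj}(ii) the adjoint of a single vertex $[a]$ is $\TW{a-1}=[\underbrace{2,\ldots,2}_{a-1}]$, so equality of the two singleton chains forces $-F_{11}^2-1=1$ and $s_2-k_{34}+1=2$, giving $F_{11}=[2]$ and $s_2=k_{34}+1$.

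The main obstacle will be the combinatorial walk through the $(\mathrm{IV_{2b}})$-fiber, where I must rule out every alternative for $B_g$ (for instance, one entering $F_1$ through $F_{12}$, or one inserting non-trivial $T_{11}$ or $T_{12}$) by consistently combining the shape of the fiber with the non-branching of $S_1$ and its successors. Once $B_g$ is determined, the algebraic step is a one-line application of Lemma~\ref{lem:adj}(ii).
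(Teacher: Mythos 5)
Your argument breaks down at its very first step, and the breakdown is precisely where the paper's proof does almost all of its work. You assert that ``reading $B_g$ from $D_0$ outwards'' the chain begins $[F_0',S_1,F_1']$, but nothing in the dual graph tells you that the branch of $D$ emanating from $D_0$ through $F_0'$ is $B_g$ rather than $A_g$: both $A_g$ and $B_g$ are linear chains attached to $D_0$, and when $g=1$ they are combinatorially indistinguishable. The paper opens with exactly this dichotomy (``Either $S_2\subset A_g$ or $S_2\subset B_g$'') and spends the bulk of the proof eliminating the case $S_2\subset B_g$, i.e.\ the case $A_1=[F_{12},F_{11},F_1',S_1,F_0']$. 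That elimination is not combinatorial: it uses Proposition~\ref{prop:cres} to compute $[B_1,\NS_1+1]=A_1^{\ast}=[3,\TW{s_1-3},5]$, deduces $s_2=3$, $k_{34}=2$, hence $T_{23}=[2]\ne\emptyset$ and $T_{22}=\emptyset$, and only then reaches a contradiction with Lemma~\ref{lem:31a42b2} (which forces $s_1=2$ against $s_1\ge 3$). Your proposal contains no substitute for this, so the identification of $B_g$ --- and with it everything that follows --- is unsupported.

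A second, related gap concerns $T_{12}$. In the $(\mathrm{IV_{2b}})$ fiber, $T_{12}$ is not adjacent to $F_{12}$ or wedged ``at the other end'' of the segment $F_1'$--$F_{11}$--$F_{12}$; it sits beyond $E_1$ and, inside $D$, hangs off $S_2$. So a non-empty $T_{12}$ does not disturb the linearity of $B_g$ at all --- it makes $S_2$ a branching component, which is a priori perfectly possible (it forces $A_g=S_2$). The paper rules this out numerically: if $T_{12}\ne\emptyset$ then $F_{11}^2<-2$ and $A_g=S_2$, whence $[B_g,\NS_g+1]=A_g^{\ast}=\TW{s_2-1}$, which is incompatible with $B_g$ containing the curve $F_{11}$ of self-intersection $<-2$. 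Your ``identical argument at the other end'' therefore does not apply. (Your treatment of $T_{11}$ is closer to correct --- a non-empty $T_{11}$ does make $F_{11}$ a branching component --- though even there one must say why $F_{11}$ cannot be one of the legitimate branching components $E_{i,0}$ of $D$; the paper again invokes Proposition~\ref{prop:cres} for this in the parallel case. Your final algebraic step via Lemma~\ref{lem:adj}(ii) is fine once $T_{11}=T_{12}=\emptyset$ and the shape of $B_g$ are actually established.)
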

\begin{proof}
Either $S_2\subset A_g$ or $S_2\subset B_g$.
Suppose $S_2\subset B_g$.
We have $T_{24}=T_{12}=\emptyset$.
By Lemma~\ref{lem:31a42b1}, $[F_2',T_{23}]=[k_{34}+1]^{\ast}=\TW{k_{34}}$.
Thus $f_2=2$, $T_{23}=\TW{k_{34}-1}$.
Since $[F_{11},T_{11}]=\TW{s_2-k_{34}}$,
we get $F_{11}=[2]$ and $T_{11}=\TW{s_2-k_{34}-1}$.
If $T_{11}\ne\emptyset$,
then $A_1=F_{12}$ or $A_1=\TP{T}_{11}$ since $F_{11}$ is a branching component of $D$.
Thus $A_1$ consists of ($-2$)-curves,
which contradicts Proposition~\ref{prop:cres}.
Hence $T_{11}=\emptyset$.
We have $s_2=k_{34}+1$, $g=1$ and $A_1=[F_{12},F_{11},F_1',S_1,F_0']=[\TW{3},S_1,2]$.
We infer $s_1\ge 3$.
By Proposition~\ref{prop:cres},
$[B_1,\NS_1+1]=A_1^{\ast}=[3,\TW{s_1-3},5]$.
This means that $s_2=3$ and $k_{34}=2$.
Since $T_{23}=[2]\ne\emptyset$,
we have $B_1=[S_2,F_2',T_{23}]$ and $T_{22}=\emptyset$.
By Lemma~\ref{lem:31a42b2}, $s_1=2$, which is a contradiction.
Hence $S_2\subset A_g$.
We have $B_g=[F_0',S_1,F_1',F_{11},F_{12}]$ and $T_{11}=\emptyset$.
By Lemma~\ref{lem:31a42b1}, $[T_{12},s_2-k_{34}+1]=\TW{-F_{11}^2-1}$.
This shows $s_2=k_{34}+1$ and $T_{12}=\TW{-F_{11}^2-2}$.
If $T_{12}\ne\emptyset$, then $F_{11}^2<-2$ and $A_g=S_2$.
By Proposition~\ref{prop:cres},
$[B_g,\NS_g+1]=A_{g}^{\ast}=\TW{s_2-1}$, which is absurd.
Hence $T_{12}=\emptyset$ and $F_{11}=[2]$.
\end{proof}

Case (i): $T_{24}=\emptyset$.
By Lemma~\ref{lem:31a42b1},
$[F_2',T_{23}]=\TW{k_{34}}$.
We have $f_2=2$ and $T_{23}=\TW{k_{34}-1}=\TW{s_2-2}\ne\emptyset$.
If $T_{22}\ne\emptyset$, then $A_1=T_{22}$ or $A_1=\TP{T}_{23}$.
Thus $A_1$ consists of ($-2$)-curves,
which contradicts Proposition~\ref{prop:cres}.
Hence $T_{22}=\emptyset$.
We infer $g=1$ and $A_1=\TP{[S_2,F_2',T_{23}]}=[\TW{k_{34}},k_{34}+1]$.
By Lemma~\ref{lem:31a42b2},
we have $S_1=[2]$ and $B_1=\TW{5}$.
By Proposition~\ref{prop:cres}, $A_1=\TW{\NS_1}\TA[6]=[\TW{\NS_1-1},7]$.
Hence $k_{34}=6$, $A_1=[\TW{6},7]$.
The curve $E_{22}$ intersects only the first and the last curve of $A_1$
among the irreducible components of $D$.
By Lemma~\ref{lem:oc}, $C$ can be constructed as $C_4^{\ast}$.

Case (ii): $T_{24}\ne\emptyset$.
We have $A_g=S_2$.
By Proposition~\ref{prop:cres},
$[B_g,\NS_g+1]=A_g^{\ast}=\TW{s_2-1}$.
We see $S_1=[2]$, $B_g=\TW{5}$, $s_2=7$ and $k_{34}=6$
by Lemma~\ref{lem:31a42b3}.
By Lemma~\ref{lem:31a42b2}, $T_{22}=\emptyset$.
We infer $g=2$.
Either $B_1=\TP{T}_{24}$ or $A_1=T_{24}$.
If $A_1=T_{24}$, then $B_1=[F_2',T_{23}]$.
By Proposition~\ref{prop:cres} and Lemma~\ref{lem:31a42b1},
$T_{24}=\TW{\NS_1}\TA [F_2',T_{23}]^{\ast}=\TW{\NS_1}\TA [T_{24},7]$, which is absurd.
Hence $B_1=\TP{T}_{24}$ and $A_1=\TP{[F_2',T_{23}]}$.
By Proposition~\ref{prop:cres} and Lemma~\ref{lem:31a42b1},
$[\NS_1+1,T_{24}]=[F_2',T_{23}]^{\ast}=[T_{24},7]$.
It follows from Lemma~\ref{lem:adj} that
$\NS_1=6$, $T_{24}=[7_k]$, where $k=\NV(T_{24})\ge1$.
We have $B_2=\TW{5}$, $A_2=[7]$, $B_1=[7_k]$ and $A_1=\TW{6}^{\ast k+1}$.
The curve $E_{22}$ intersects only the first curve of $A_1$ and
the last curve of $B_1$
among the irreducible components of $D$.
By Lemma~\ref{lem:oc}, $C$ can be constructed as $C_{4k+4}^{\ast}$.

$\mathrm{\bf(III_{1b})}$,
$\mathrm{\bf(III_{1b})+(IV_{2a})}$
or
$\mathrm{\bf(III_{1b})+(IV_{2b})}$.
In each case,
we have $-2\ge\varphi(S_1)^2$ because $S_1$ meets with
only $F_i'$ among the irreducible components of $F_i$ for each $i$.
Hence all the cases do not occur.
\begin{figure}[tb]
\begin{center}
\includegraphics{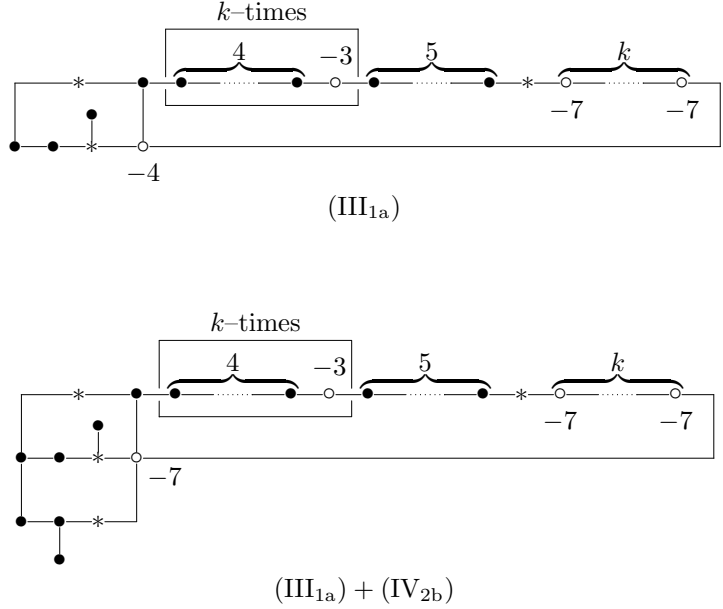}
\caption{The dual graphs of $D+E_1+E_2$}\label{fig4}
\end{center}
\end{figure}

We list the weighted dual graphs of  $D+E_1+E_2$ in Figure~\ref{fig4},
where $k=0$ if $T_{24}=\emptyset$.
We proved that if a rational unicuspidal plane curve $C$
satisfies the conditions $(C')^2=-2$, $\KB=2$,
then $C$ can be constructed in the same way
as $C_{4k}$ or $C_{4k}^{\ast}$ for some $k$.
By Proposition~\ref{prop:ocpe},
$C$ is projectively equivalent to $C_{4k}$ or $C_{4k}^{\ast}$.
We have thus proved Theorem~\ref{thm1}.

Finally, we prove Theorem~\ref{thm:pm}.
The ``only if'' part of Theorem~\ref{thm:pm}
follows from \cite[Lemma 4.4]{ko2}.
We show the ``if'' part.
By Theorem 3.1, Lemma 4.2, Lemma 4.5 and Lemma 4.6 of \cite{ko2},
we deduce that
if $\KB(\SP^2\setminus C)\ge 0$,
$\PG_2(\SP^2\setminus C)=\PG_3(\SP^2\setminus C)=0$,
then $C$ is a rational unicuspidal curve such that $\KB=2$, $(C')^2=-2$ and
the dual graph of the exceptional curve
of the minimal embedded resolution of $C$ is linear.
Thus the ``if'' part follows from Theorem~\ref{thm1} and  Lemma~\ref{lem:oc}.
\begin{acknowledgment}
The author would like to express his thanks to Professor Fumio Sakai
for his helpful advice.
\end{acknowledgment}
\section*{Appendix by Fumio Sakai}
Let $N$ be the nodal cubic $x^3+y^3-xyz=0$.
Let $O$ denote the node $(0,0,1)$.
It is well known that the set $N\setminus \{O\}$ has a group structure,
which is isomorphic to the multiplicative group $\SC^{\ast}$.
The group isomorphism is given by
$\phi:\SC^{\ast}\ni t \mapsto (t,-t^2,t^3-1)\in N\setminus \{O\}$.
Geometrically, we have $t_1t_2t_3=1$ if and only if
$\phi(t_1)$, $\phi(t_2)$ and $\phi(t_3)$ are collinear.
We see easily that $N$ has three flexes
$O_1=(1,-1,0)=\phi(1)$, $O_2=(1,-\omega,0)=\phi(\omega)$ and
$O_3=(1,-\omega^2,0)=\phi(\omega^2)$, where $\omega=e^{2\pi i/3}$.
There exist three projective transformations
\[
\varphi_1=\left(
\begin{array}{ccc}
0 & 1 & 0\\
1 & 0 & 0\\
0 & 0 & 1
\end{array}
\right),\quad
\varphi_2=\left(
\begin{array}{ccc}
0 & \omega^2 & 0\\
\omega & 0 & 0\\
0 & 0 & 1
\end{array}
\right),\quad
\varphi_3=\left(
\begin{array}{ccc}
0 & \omega & 0\\
\omega^2 & 0 & 0\\
0 & 0 & 1
\end{array}
\right)
\]
such that $\varphi_i(O_i)=O_i$, $\varphi_i(O_j)=O_k$ for distinct
$i$, $j$, $k$ among $\{1,2,3\}$.
\begin{theorem}
Define three conics
\begin{eqnarray*}
&&Q_1:21(x^2+y^2)-22xy-6(x+y)z+z^2=0,\\
&&Q_2:21(\omega x^2+\omega^2y^2)-22xy-6(\omega^2x+\omega y)z+z^2=0,\\
&&Q_3:21(\omega^2x^2+\omega y^2)-22xy-6(\omega x+\omega^2y)z+z^2=0.
\end{eqnarray*}
Then the conic $Q_1$ (resp.~$Q_2$, $Q_3$) intersects $N$ only at the point
$P_1=\phi(-1)$ (resp.~$P_2=\phi(-\omega)$, $P_3=\phi(-\omega^2)$).

Conversely, if $Q$ is an irreducible conic with the property that
$Q$ intersects $N$ only at a point $P\in N\setminus \{O\}$,
then $Q$ is one of the above three conics.

Note that the tangent line to $Q_i$ at $P_i$ passes through
$O_i$ for each $i$ and that $\varphi_i(Q_i)=Q_i$, 
$\varphi_i(Q_j)=Q_k$ for distinct
$i$, $j$, $k$ among $\{1,2,3\}$.
\end{theorem}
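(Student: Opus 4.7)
The plan is to parameterize $N\setminus\{O\}$ by $\phi$ and reduce the problem to a single polynomial identity in $t$. Since $Q$ is required to meet $N$ only at a point $P\in N\setminus\{O\}$, $Q$ does not pass through $O=(0,0,1)$, so the coefficient of $z^2$ in any defining form of $Q$ is nonzero; we normalize it to $1$. Writing $P=\phi(t_0)$ and $Q = Ax^2+By^2+z^2+Dxy+Exz+Fyz$, substitution of $\phi(t) = (t,-t^2,t^3-1)$ produces a polynomial in $t$ of degree exactly $6$, namely
\[
Q(\phi(t)) \;=\; t^6 - Ft^5 + (B+E)t^4 + (-2-D)t^3 + (A+F)t^2 - Et + 1.
\]
Because $\phi$ restricts to an isomorphism $\SP^1\setminus\{0,\infty\}\to N\setminus\{O\}$, the multiplicity of $t_1\in\SC^{\ast}$ as a root of this polynomial equals the intersection multiplicity of $Q$ and $N$ at $\phi(t_1)$, and Bezout forces the total to be $6$.

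Hence the condition of the theorem is equivalent to the polynomial identity $Q(\phi(t))=(t-t_0)^6$. Matching the coefficients of $t^k$ for $k=0,\ldots,5$ gives six equations; the constant term yields $t_0^6=1$, and the remaining five determine $A,B,D,E,F$ uniquely as explicit polynomials in $t_0$. Thus $t_0$ is a sixth root of unity and, for each such $t_0$, the conic $Q$ is unique. If $t_0^3=1$, i.e.\ $t_0\in\{1,\omega,\omega^2\}$, one checks directly that the resulting $Q$ factors as the square of a single line, namely the flex tangent to $N$ at $\phi(t_0)\in\{O_1,O_2,O_3\}$; hence no irreducible conic arises in this case. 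If $t_0^3=-1$, substitution of $t_0=-1,-\omega,-\omega^2$ reproduces exactly the equations of $Q_1,Q_2,Q_3$ with $P_i=\phi(t_0)$, proving both existence and uniqueness. Irreducibility of each $Q_i$ is then confirmed by a $3\times 3$ determinant computation on its symmetric matrix.

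For the remaining assertions, the tangent line to $Q_i$ at $P_i$ meets $N$ at $P_i$ with multiplicity at least two, since $Q_i$ does; in the group law on $N\setminus\{O\}\cong\SC^{\ast}$ the three intersection points of a line with $N$ have product $1$, so the third intersection point of this tangent with $N$ is $\phi(t_0^{-2})$. Using $t_0^3=-1$ one finds $t_0^{-2}\in\{1,\omega,\omega^2\}$ and matches $O_1,O_2,O_3$ respectively for $t_0=-1,-\omega,-\omega^2$. Finally, each $\varphi_i$ preserves $N$ by inspection of the equation $x^3+y^3=xyz$, and permutes the flexes $\{O_1,O_2,O_3\}$ in the stated way; by the uniqueness of $Q_1,Q_2,Q_3$ already established, $\varphi_i$ must permute the $Q_j$ by the same rule. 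The main obstacle is the bookkeeping in the sextic coefficient match together with recognizing the double flex-tangent factorization when $t_0^3=1$; all other steps are straightforward consequences of Bezout and the group structure on $N$.
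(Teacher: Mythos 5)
Your proposal is correct and follows essentially the same route as the paper: substitute the parameterization $\phi(t)=(t,-t^2,t^3-1)$ into a general conic with $z^2$-coefficient normalized to $1$, observe that the resulting sextic must be $(t-\alpha)^6$, solve the coefficient equations to get $\alpha^6=1$ and a unique conic for each $\alpha$, and discard $\alpha^3=1$ as giving a doubled flex tangent. You additionally verify the ``Note'' assertions (irreducibility, the tangent through $O_i$, and the $\varphi_i$-action), which the paper states without proof; this is a welcome but minor supplement.
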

\begin{proof}
Let $Q$ be a conic defined by the general equation:
\[
ax^2+by^2+cz^2+dxy+exz+fyz=0.
\]
Suppose that $Q$ intersects $N$ only at a point
$P=\phi(\alpha)\in N\setminus\{O\}$,
where $\alpha\in\SC^{\ast}$.
Then we have
\[
at^2+bt^4+c(t^3-1)^2-dt^3+et(t^3-1)-ft^2(t^3-1)=0.
\]
It follows that
\[
ct^6-ft^5+(b+e)t^4-(2c+d)t^3+(a+f)t^2-et+c=0.
\]
Since $Q$ does not pass through $O$,
we infer that $c\neq 0$.
So we may assume that $c=1$.
Thus, we have
\[
t^6-ft^5+(b+e)t^4-(2+d)t^3+(a+f)t^2-et+1=0.
\]
By our hypothesis, this equation must have only one multiple root $\alpha$
of order six.
We see that
$\alpha^6=1$, $f=6\alpha$, $b+e=15\alpha^2$, $2+d=20\alpha^3$,
$a+f=15\alpha^4$, $e=6\alpha^5$.
In particular, $\alpha$ is a 6-th root of unity.
We then obtain the equations of the conics $Q_1$, $Q_2$, $Q_3$
for $\alpha=-1$, $-\omega$, $-\omega^2$, respectively.
For the cases in which $\alpha=1$, $\omega$, $\omega^2$,
the conic $Q$ is reduced to a double tangent line
at the flex $O_1$, $O_2$, $O_3$, respectively.
\end{proof}

\ \\
\textsc{%
{\small
Department of Mathematics,
Faculty of Science,
Saitama University,\\
Shimo-Okubo 255,
Urawa Saitama 338--8570,
Japan.}}\\
{\small
\textit{E-mail address}:  \texttt{ktono@rimath.saitama-u.ac.jp}
}
\end{document}